\newtheorem{theorem}{Theorem}[section]
\newtheorem{lemma}[theorem]{Lemma}
\newtheorem{remark}{Remark}
\newtheorem{definition}{Definition}[section]
\newtheorem{proposition}[theorem]{Proposition}
\newtheorem{assumption}{Assumption}
\newcommand{\norm}[1]{\| #1 \|}
\newcommand{\quotes}[1]{``#1''}
\newcommand{\etal}{\textit{et al.}}
\newdimen\@widthOfTo%
\newdimen\@widthOfLand%
\newdimen\@widthOfImplies%
\pgfmathsetmacro{\@scaleFactorImplies}{\@widthOfTo/\@widthOfImplies}%
\pgfmathsetmacro{\@scaleFactorTo}{\@widthOfLand/\@widthOfTo}%
\newcommand*{\ScaledImplies}{\mathrel{\raisebox{0.3ex}{\scalebox{\@scaleFactorImplies}{\ensuremath{\Longrightarrow}}}}}%
\newcommand*{\ScaledTo}{\mathbin{\raisebox{0.3ex}{\scalebox{\@scaleFactorTo}{\ensuremath{\to}}}}}%
\title{On the Exponential Stability of Projected Primal-Dual Dynamics on a Riemannian Manifold}
\author{
  P. A. Bansode\thanks{P. A. Bansode is with department of Instrumentation Engineering, Ramrao Adik Institute of Technology, Mumbai, 400706 India.~{\tt\small prashant.bansode@rait.ac.in}} \\
  %% examples of more authors
   \And
 V. Chinde\thanks{V. Chinde is with National Renewable Energy Laboratory, Lakewood, CO 80401, USA.}  \\
 \And
 S. R. Wagh\thanks{S. R. Wagh is with department of Electrical Engineering, Veermata Jijabai Technological Institute, Mumbai, 400019 India.} \\
 \And
 R. Pasumarthy\thanks{R. Pasumarthy is with department of Electrical Engineering, Indian Institute of Technology Madras, Madras, 600036 India.} \\
 \And
 N. M. Singh\thanks{N. M. Singh is with department of Electrical Engineering, Veermata Jijabai Technological Institute, Mumbai, 400019 India.} 
  %% \AND
  %% Coauthor \\
  %% Affiliation \\
  %% Address \\
  %% \texttt{email} \\
  %% \And
  %% Coauthor \\
  %% Affiliation \\
  %% Address \\
  %% \texttt{email} \\
  %% \And
  %% Coauthor \\
  %% Affiliation \\
  %% Address \\
  %% \texttt{email} \\
}
\begin{document}
\maketitle

% keywords can be removed

\begin{abstract}
	Equivalence of convex optimization and variational inequality is well established in the literature such that the latter is formally recognized as a fixed point problem of the former. Such equivalence is also known to exist between a saddle-point problem and the variational inequality. The variational inequality is a static problem which can be further studied within the dynamical settings using a framework called the projected dynamical system whose stationary points coincide with the static solutions of the associated variational inequality. Variational inequalities have rich properties concerning the monotonicity of its vector-valued map and the uniqueness of its solution, which can be extended to the convex optimization and saddle-point problems. Moreover, these properties also extend to the representative projected dynamical system. The objective of this paper is to harness rich monotonicity properties of the representative projected dynamical system to develop the solution concepts of the convex optimization problem and the associated saddle-point problem. To this end, this paper studies a linear inequality constrained convex optimization problem and models its equivalent saddle-point problem as a variational inequality. Further, the variational inequality is studied as a projected dynamical system\cite{friesz1994day} which is shown to converge to the saddle-point solution. By considering the monotonicity of the gradient of Lagrangian function as a key factor, this paper establishes exponential convergence and stability results concerning the saddle-points. Our results show that the gradient of the Lagrangian function is just monotone on the Euclidean space, leading to only Lyapunov stability of stationary points of the projected dynamical system.  To remedy the situation, the underlying projected dynamical system is formulated on a Riemannian manifold whose Riemannian metric is chosen such that the gradient of the Lagrangian function becomes strongly monotone. Using a suitable Lyapunov function, the stationary points of the projected dynamical system are proved to be globally exponentially stable and convergent to the unique saddle-point. 
\end{abstract}

\keywords{Primal-dual dynamics \and Variational inequality \and Projected dynamical system \and Exponential stability \and Riemannian manifold}

%%%%%%%%%%%%%%%%%%%%%\textbf{Introduction}%%%%%%%%%%%%%%%%%%%%%%%%%%%%%%%%%%%%%%%%%%%%%%%%%%%%%%%%%%%
\section{Introduction}
The convex optimization methods have remained as the subject of substantial research for many decades. The primal-dual gradient-based method is one of such methods which dates back to late $1950s$\cite{arrow1958studies}. Lately, these methods (also referred to as primal-dual dynamics as its dynamical system equivalent) have found many applications in the networked systems (viz, the power networks \cite{zhao2014design,mallada2017optimal,yi2015distributed} and the wireless networks  \cite{feijer2010stability,chen2012convergence,ferragut2014network}),  and building automation systems \cite{kosaraju2018stability}. The primal-dual dynamics (PD dynamics) seek a solution to the saddle-point problem representing the original constrained convex optimization problem by taking gradient descent along the primal variable and gradient ascent along the dual variable. From the perspectives of systems and control theory, the PD dynamics have much to offer in terms of stability and convergence with respect to the saddle point solution.

During recent years the notions of stability of PD dynamics have evolved. The asymptotic stability of PD dynamics has been established as one of the most fundamental notions. Feijer \etal \cite{feijer2010stability} explores the PD dynamics with applications to network optimization problems and prove its asymptotic stability. The dual dynamics pertaining to the inequality constraints have been shown to include switching projections that restrict the dual variables to the set of nonnegative real numbers. Due to the switching projections, the PD dynamics becomes discontinuous, which is further modeled as a hybrid dynamical system. A Krasovskii-type Lyapunov function along with LaSalle invariance principle of hybrid systems \cite{lygeros2003dynamical} have been utilized to prove the asymptotic stability of the PD dynamics. In \cite{cherukuri2016asymptotic} it is proved that the PD dynamics is a special case of the projected dynamical systems. It uses the invariance principle of Carath\'eodory solutions to show that the saddle-point solution of PD dynamics is unique and globally asymptotically stable. Although widely established, the notion of asymptotic stability does not offer explicit convergence bounds of the PD dynamics which is an essential factor in case of the on-line optimization techniques. One must ensure that the trajectories converge to the saddle point solution in finite time. To explicitly obtain stronger convergence rates, research interests have shifted towards the notions of global exponential convergence and stability.

The pathway leading to the exponential stability of the PD dynamics is not as straightforward as it is for its asymptotic stability. The existence of right-hand side discontinuities and non-strongly monotone gradient of the associated Lagrangian function seem to prevent the saddle-point solution from being exponentially stable. The globally exponential stability has been the most desirable yet often formidable aspect of PD dynamics, which guarantees a minimum rate of convergence to the saddle point. 
While exhaustive literature on asymptotic stability of the PD dynamic can be encountered, its exponential stability has not been explored except for the recent studies      \cite{cortes2018distributed,nguyen2018contraction,qu2019exponential,dhingra2018proximal}. The optimization problem considered in \cite{cortes2018distributed} proves the exponential stability of the PD dynamic for an equality constrained optimization problem. Robustness and contraction analysis of the primal-dual dynamics establishing exponential convergence to the saddle-point solution is presented in \cite{nguyen2018contraction}. In \cite{qu2019exponential}, the PD dynamics is proved to be globally exponentially stable for linear equality and inequality constrained convex optimization problem. Under assumptions on strong convexity and smoothness of the objective function and full row rank conditions of the constraint matrices, the PD dynamics is shown to have global exponential convergence to the saddle point solution. It mainly proposes the augmented Lagrangian function that results in a PD dynamics which does not have right-hand side discontinuities. By employing a quadratic Lyapunov function that has non-zero off-diagonal block matrices, it shows that the PD dynamics is globally exponentially stable. 
In \cite{dhingra2018proximal} a composite optimization problem is considered in which the objective function is represented as a sum of differentiable non-convex component and convex non-differentiable regularization component. A continuously differentiable proximal augmented Lagrangian is obtained by using a Moreau envelope of the regularization component.  This results in a continuous-time PD dynamics which under the assumption of strong convexity of the objective function, is shown to be exponentially stable by employing a framework of the integral quadratic constraints (IQCs) \cite{587335}. By using a well-known result pertaining to linear systems with nonlinearities in feedback connection that satisfy IQCs \cite{hu2016exponential}, it proves the global exponential stability of the PD dynamics.

\subsection{Motivation and contribution}
For a sufficiently small step size, a Euler discretized globally exponentially stable PD dynamics leads to geometric convergence to the saddle-point solution\cite{stuart1994numerical}. This property has been widely appreciated in recent articles such as \cite{dhingra2018proximal,qu2019exponential}. The existing methods have considered the augmented Lagrangian techniques at a pivotal position for proving the globally exponential stability of the PD dynamics. 
This paper does not rely on augmented Lagrangian techniques to arrive at exponentially stable saddle-point solution. It presents a complementary approach that uses a combined framework of variational inequalities, projected dynamical systems \cite{nagurney2012projected}, and the theory of Riemannian manifolds\cite{udriste1994convex,da2002contributions} to derive conditions that lead to the global exponential stability of the saddle-point solution.

This paper exploits an equivalence between a constrained optimization problem and a variational inequality problem as discussed in  \cite{kinderlehrer1980introduction,facchinei2007finite,nagurney2012projected}. They are shown to be equivalent when the vector-valued map of the variational inequality is the gradient of the objective function of the underlying optimization problem. Besides that, when the objective function is convex the Karush-Kuhn-Tucker (KKT) conditions of both problems reveal that the Lagrangian function associated with the variational inequality and the Lagrangian of the optimization problem have the exactly same saddle-point\cite{facchinei2007finite}. This further hints at formulating the saddle-point problem (of the corresponding optimization problem) as a variational inequality when both primal, as well as dual variables, are of interest. 

Variational inequalities are equivalent to fixed point problems \cite{nagurney2012projected} in the sense that they yield only the static solutions. Thus the variational inequality formulation of the saddle-point problem would only result in the static description of the saddle-point. To understand the dynamic behavior of such variational inequality, this paper brings in the framework of projected dynamical systems\cite{friesz1994day,nagurney2012projected}. The projected dynamical system combines essential features of both variational inequalities and dynamical systems such that its solution coincides with the static equilibrium of the variational inequality problem. These dynamical systems have interesting features which they derive from the underlying variational inequality problem. In \cite{gao2003exponential} a globally projected dynamical system \cite{friesz1994day} is proved to be exponential stable when the vector-valued mapping concerning the variational inequality problem is strongly monotone. This motivates to represent the saddle-point problem as a variational inequality and use the framework of the projected dynamical system for proposing a new dynamical system that is equivalent to the PD dynamics. Aiming at exponential stability of the saddle-point solution, this paper indirectly poses the saddle-point problem as a projected dynamical system (regarded hereafter as the projected primal-dual dynamics). While deriving the stability results of the proposed dynamics, our analysis reveals that the gradient of the Lagrangian function is not strongly monotone on the Euclidean space, which further deprives the proposed dynamics of being exponentially stable. Towards this end, our paper seeks a differential geometry which favors the desired properties such as strong monotonicity of the gradient of the Lagrangian and exponential stability of the proposed dynamics.

Convexity and monotonicity properties have strong connections\cite{karamardian1990seven}. Since Riemannian geometry is considerably the most natural framework for convexity\cite{udriste1994convex,helmke2012optimization}, it can also be explored for the monotonicity properties of the underlying gradient maps. There already exists a wide interest in optimization \cite{luenberger1972gradient,da2002contributions,absil2009optimization} and projected dynamical systems \cite{hauswirth2016projected,hauswirth2018projected} over manifolds, which further motivates us to appreciate the Riemannian geometry for the proposed dynamics. For a linear inequality constrained convex optimization problem, our work establishes that the key to achieving an exponentially stable projected primal-dual dynamics is to choose a Riemannian metric that leads to strong monotonicity of the underlying gradient. With Lipschitz continuity of the gradient of the Lagrangian function, it is proved that the equilibrium solution of the projected primal-dual dynamics is globally exponentially stable.

The reported work envelopes following key contributions:
\begin{itemize}
	\item The equivalence between a saddle point problem and a variational inequality problem is established and using the framework of the projected dynamical system, a projected PD dynamic is proposed.
	\item The restriction of the strongly monotone gradient of the Lagrangian function is overcome by proposing the projected PD dynamics constrained to a Riemannian manifold under a suitable Riemannian metric.
	\item The projected PD dynamics defined over a Riemannian manifold is proved to be globally exponentially stable for a linear inequality constrained convex optimization problem. 
	\item The effectiveness of the proposed method is studied with the application of $L_2$ regularized least squares problem. The Euler discretized version of the proposed dynamics is shown to converge geometrically to the saddle-point solution.
\end{itemize}

\subsection{Notations and Preliminaries}\label{prel}
The set $\mathbb{R}$ (respectively $\mathbb{R}_{\geq 0}$ or $\mathbb{R}_{> 0}$) is the set of real (respectively non-negative or positive) numbers. If $f: \mathbb{R}^n \rightarrow \mathbb{R}$ is continuously differentiable in $x \in \mathbb{R}^n$, then $\nabla_x f:\mathbb{R}^n \rightarrow \mathbb{R}^n$ is the gradient of $f$ with respect to $x$. $\norm{.}$ denotes the Euclidean norm. For scalars $x,y$, $[x]^+_y:=x$ if $y>0$ or $x>0$, and $[x]^+_y:=0$ otherwise. For a set $X \subseteq \mathbb{R}^n$, the notation $\mathrm{relint}X$ defines the relative interior of $X$. The notation $\mathrm{P_1} \implies \mathrm{P_2}$ implies that the problem $\mathrm{P_2}$ can be derived by specializing the problem $\mathrm{P_1}$.

The following subsections provide preliminaries relevant to the main results of this paper.
\subsubsection{Convex Optimization, Variational Inequality, and Projected Dynamical System}
%%%% variational inequality definition
\begin{definition}(The Variational Inequality Problem, \cite{nagurney2012projected})\label{vi_Def}\\
	For a closed convex set $X\in \mathbb{R}^n$ and vector function $F:X\rightarrow \mathbb{R}^n$, the finite dimensional variational inequality problem, $\mathrm{VI(F,X)}$, is to determine a vector $x^*\in X$ such that 
	\begin{equation}
	(x-x^*)^TF(x^*)\geq 0,~\forall x\in X. \label{vip}
	\end{equation}
\end{definition}
The $\mathrm{VI(F,X)}$ is equivalent to solving a system of equations as given below:

%%%% A system of nonlinear equations
\begin{proposition}(A System of Nonlinear Equations,\cite{nagurney2012projected})\label{snle}\\
	Let $F: X \rightarrow \mathbb{R}^n$ be a vector function. Then $x^* \in \mathbb{R}^n$ solves the variational inequality $\mathrm{VI(F,X)}$ if and only if $x^*$ solves the system of equations
	\begin{align}
	F(x^*)=\mathbf{0}. \label{nle}
	\end{align}	
\end{proposition}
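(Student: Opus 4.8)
The plan is to prove the two implications separately, exploiting the fact that here the feasible set is effectively all of $\mathbb{R}^n$ — as signaled by the statement requiring $x^* \in \mathbb{R}^n$ rather than $x^* \in X$ — which is exactly what makes the variational inequality collapse to an equation. The structure is one trivial direction and one short perturbation argument.

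The easy direction ($\Leftarrow$) is a direct substitution. If $F(x^*) = \mathbf{0}$, then for every admissible $x$ we have $(x - x^*)^T F(x^*) = (x - x^*)^T \mathbf{0} = 0 \geq 0$, so \eqref{vip} holds trivially and $x^*$ solves $\mathrm{VI(F,X)}$. No structure on $X$ beyond membership is needed here.

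For the forward direction ($\Rightarrow$), the key idea is that in the unconstrained setting we may perturb $x^*$ in both directions and still remain admissible. Given an arbitrary $d \in \mathbb{R}^n$, I would first set $x = x^* + d$ in \eqref{vip} to get $d^T F(x^*) \geq 0$, and then set $x = x^* - d$ to get $-d^T F(x^*) \geq 0$; together these force $d^T F(x^*) = 0$. Since $d$ was arbitrary, the choice $d = F(x^*)$ yields $\norm{F(x^*)}^2 = 0$, hence $F(x^*) = \mathbf{0}$.

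The only genuine subtlety, and the step I would flag, is the role of the set $X$. The bidirectional perturbation is legitimate precisely when both points $x^* \pm d$ remain admissible, which is automatic when $X = \mathbb{R}^n$ (or, more generally, when $x^*$ lies in the relative interior of $X$) but fails at a boundary point of a proper convex set, where the admissible directions form only a cone and a solution can satisfy $(x - x^*)^T F(x^*) \geq 0$ with $F(x^*) \neq \mathbf{0}$. I would therefore make explicit that this proposition is the unconstrained specialization of Definition \ref{vi_Def}, which is exactly the form needed later when the projected dynamical system evolves on the whole space and its stationary points are characterized as zeros of $F$.
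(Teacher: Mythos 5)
Your proof is correct, and it is essentially the standard argument for this result (the paper itself gives no proof, citing it from \cite{nagurney2012projected}). The backward direction by substitution is immediate, and your forward direction via the bidirectional perturbation $x = x^* \pm d$ followed by the choice $d = F(x^*)$ is sound; the cited source does the same thing in one step by substituting $x = x^* - F(x^*)$ directly into \eqref{vip} to get $-\norm{F(x^*)}^2 \geq 0$, which is marginally more economical but identical in substance.

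Your flag about the role of $X$ is the most valuable part of the write-up and is exactly right: as literally stated in the paper, with $F$ defined on a general set $X$, the forward direction is false (take $X = [0,1] \subset \mathbb{R}$ and $F \equiv 1$; then $x^* = 0$ solves $\mathrm{VI}(F,X)$ but $F(0) \neq \mathbf{0}$). The original proposition in \cite{nagurney2012projected} carries the hypothesis $X = \mathbb{R}^n$, which the paper's restatement drops while retaining the telltale ``$x^* \in \mathbb{R}^n$''. Your proof correctly supplies and uses that missing hypothesis, so the argument is complete for the unconstrained specialization, which is the only setting in which the claimed equivalence holds.
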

where $\mathbf{0}$ is zero vector of appropriate dimensions.

When the function $F$ is realized as a gradient of a real-valued function $f$, the following relationship between a variational inequality and an optimization problem is established.
%%%% A convex optimization problem
\begin{proposition}(An Optimization Problem,\cite{nagurney2012projected})\label{optpro}\\
	Let $X\subset \mathbb{R}^n$ be closed and convex and $f:X \rightarrow \mathbb{R}$ be a continuously differentiable function. If $x^*\in X$ solves the optimization problem:
	\begin{align}
	\min_{x\in X} f(x), \label{opt}
	\end{align}  	
	then $x^*$ solves the variational inequality problem $\mathrm{VI(\nabla f,X)}$. On the other hand, if $f(x)$ is a convex function and $x^*$ solves the $\mathrm{VI(\nabla f,X)}$, then $x^*$ is a solution to the optimization problem \eqref{opt}.
\end{proposition}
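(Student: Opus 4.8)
The plan is to prove the two implications separately, relying on convexity of the feasible set $X$ for the forward direction and on convexity of $f$ for the converse. Both are classical first-order optimality arguments, so I would organize the proof around the directional-derivative characterization of a minimizer and the gradient inequality for convex functions.

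For the forward direction, I would assume $x^*$ solves \eqref{opt} and fix an arbitrary $x \in X$. Since $X$ is convex, the segment $x^* + t(x - x^*)$ lies in $X$ for all $t \in [0,1]$, so these points are admissible competitors. I would then introduce the scalar function $\phi(t) := f\bigl(x^* + t(x-x^*)\bigr)$ on $[0,1]$. Because $x^*$ is a global minimizer over $X$, we have $\phi(t) \geq \phi(0)$ for every $t \in [0,1]$, hence the forward difference quotient $(\phi(t) - \phi(0))/t$ is nonnegative. Letting $t \downarrow 0$ and using continuous differentiability of $f$ together with the chain rule gives $\phi'(0) = \nabla f(x^*)^T (x - x^*) \geq 0$. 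As $x \in X$ was arbitrary, this is precisely the variational inequality \eqref{vip} with $F = \nabla f$, so $x^*$ solves $\mathrm{VI}(\nabla f, X)$.

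For the converse, I would additionally assume that $f$ is convex and that $x^*$ solves $\mathrm{VI}(\nabla f, X)$. The key tool is the first-order characterization of convexity for a differentiable function, namely $f(x) \geq f(x^*) + \nabla f(x^*)^T (x - x^*)$ for all $x \in X$. Combining this with the hypothesis that $x^*$ satisfies \eqref{vip}, which supplies $\nabla f(x^*)^T (x - x^*) \geq 0$, I obtain $f(x) \geq f(x^*)$ for every $x \in X$, so $x^*$ is a minimizer of $f$ over $X$.

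I do not anticipate a genuine obstacle, as each direction is a short standard argument; the only points requiring care are (i) invoking convexity of $X$ to guarantee feasibility of the perturbed iterates $x^* + t(x-x^*)$ in the forward direction, and (ii) invoking convexity of $f$ (rather than merely of $X$) in the converse, which is exactly what makes the extra convexity hypothesis indispensable there and is why the forward implication holds without it.
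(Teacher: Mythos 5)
Your proof is correct and is exactly the standard first-order argument that the paper relies on by citation to \cite{nagurney2012projected} (the proposition is stated there without proof): the forward direction via the one-sided directional derivative $\nabla f(x^*)^T(x-x^*) \geq 0$ along feasible segments in the convex set $X$, and the converse via the gradient inequality $f(x) \geq f(x^*) + \nabla f(x^*)^T(x-x^*)$ for convex $f$. Both steps and the role of each convexity hypothesis are handled correctly, so there is nothing to add.
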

The optimization problem \eqref{opt} is denoted hereafter as $\mathrm{OPT(f,X)}$.

If convexity of $f$ holds, then the following relationship between $\mathrm{OPT(f,X)}$ and $\mathrm{VI(\nabla f,X)}$ can be derived.
\begin{align}
\mathrm{VI(\nabla f,X)} \implies \mathrm{OPT(f,X)}. \label{r1}
\end{align}
%%%% A fixed point problem
A variational inequality problem $\mathrm{VI(F,X)}$ is also equivalent to a fixed point problem as given below:
\begin{proposition}(A Fixed Point Problem,\cite{nagurney2012projected})\label{fpppro}\\
	$x^*$ is a solution to $\mathrm{VI(F,X)}$ if and only if for any $\alpha >0$, $x^*$ is a fixed point of the projection map:
	\begin{align}
	x^*=P_X(x^*-\alpha F(x^*)) \label{fppprob}
	\end{align}
	where \begin{align}
	P_{X}=\arg \min_{v\in X}\norm{x-v}.\label{projection}
	\end{align}
\end{proposition}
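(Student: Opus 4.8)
The plan is to reduce the fixed-point equation \eqref{fppprob} to the variational inequality \eqref{vip} by invoking the standard variational (obtuse-angle) characterization of the Euclidean projection onto a closed convex set. Since $X$ is closed and convex, for every $y \in \mathbb{R}^n$ the projection $P_X(y) = \arg\min_{v \in X}\norm{y-v}$ exists and is unique, and it is characterized as follows: $z = P_X(y)$ if and only if $z \in X$ and
\begin{equation}
(y-z)^T(x-z) \leq 0, \quad \forall x \in X. \label{proj_char}
\end{equation}

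First I would establish \eqref{proj_char}. The projection is the minimizer of the strongly convex function $g(v) = \tfrac{1}{2}\norm{y-v}^2$ over the convex set $X$, so existence and uniqueness are immediate. The first-order optimality condition for minimizing a differentiable convex function over a convex set states that $z$ is optimal if and only if $\nabla g(z)^T(x-z) \geq 0$ for all $x \in X$. Since $\nabla g(z) = z-y$, this reads $(z-y)^T(x-z) \geq 0$, which is precisely \eqref{proj_char}. It is exactly the convexity of $g$ together with that of $X$ that renders this stationarity condition both necessary and sufficient for global optimality, and hence valid for every $x \in X$.

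Next I would specialize \eqref{proj_char} to the choices $y = x^* - \alpha F(x^*)$ and $z = x^*$. The fixed-point identity $x^* = P_X(x^* - \alpha F(x^*))$ is then equivalent to $x^* \in X$ together with
\begin{equation}
\big(x^* - \alpha F(x^*) - x^*\big)^T(x-x^*) \leq 0, \quad \forall x \in X, \label{spec}
\end{equation}
that is, $-\alpha\, F(x^*)^T(x-x^*) \leq 0$ for all $x \in X$ after cancelling $x^* - x^*$.

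Finally, because $\alpha > 0$, dividing \eqref{spec} through by $-\alpha$ reverses the inequality and yields $(x-x^*)^T F(x^*) \geq 0$ for all $x \in X$, which is exactly the defining condition \eqref{vip} of $\mathrm{VI(F,X)}$. Since every step in the chain is an equivalence, both implications of the proposition follow simultaneously, and the conclusion holds for an arbitrary fixed $\alpha > 0$. I expect the only genuine subtlety to be the rigorous justification of \eqref{proj_char}, where the convexity of $X$ is essential; once the projection characterization is in hand, the remainder is routine algebra tracking the sign of $\alpha$.
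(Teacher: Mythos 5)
Your proof is correct and is the standard argument: the paper itself does not prove this proposition but cites it to the reference \cite{nagurney2012projected}, where the proof proceeds exactly as you describe, via the obtuse-angle characterization of the projection onto a closed convex set followed by the substitution $y = x^* - \alpha F(x^*)$, $z = x^*$ and division by $-\alpha$. Nothing further is needed.
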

The fixed point problem, denoted by $\mathrm{FPP(F,X)}$ shares connection with the variational inequality problem $\mathrm{VI(F,X)}$ as described below.
\begin{align}
\mathrm{FPP(F,X)} \implies \mathrm{VI(F,X)}.\label{r2}
\end{align}
%%%% Existence of solution
\begin{theorem}(Existence of Solution of Variational Inequality,\cite{nagurney2012projected}) \label{thm2.1nagu}\\
	If $X$ is compact and convex and $F(x)$ is continuous on $X$, then the variational inequality problem $\mathrm{VI(F,X)}$ admits at least one solution $x^*$.
\end{theorem}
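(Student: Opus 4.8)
The plan is to reduce the existence question to a fixed point problem and then invoke Brouwer's fixed point theorem. By Proposition~\ref{fpppro}, solving $\mathrm{VI(F,X)}$ is equivalent to finding a fixed point of the projection map
\begin{equation*}
T(x) := P_X\bigl(x - \alpha F(x)\bigr), \qquad \alpha > 0 \text{ fixed},
\end{equation*}
where $P_X$ is the projection defined in \eqref{projection}. Hence it suffices to exhibit a point $x^* \in X$ with $T(x^*) = x^*$, since such a point is precisely a solution of the variational inequality by \eqref{fppprob}.

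First I would check that $T$ is a well-defined self-map of $X$. Because $X$ is nonempty, closed, and convex, the minimizer $P_X(y) = \arg\min_{v \in X}\norm{y - v}$ exists and is unique for every $y \in \mathbb{R}^n$, and by construction $P_X(y) \in X$. Consequently $T(x) \in X$ for all $x \in X$, so $T : X \to X$. Next I would establish continuity of $T$. The key fact is that the metric projection onto a closed convex set is nonexpansive, i.e. $\norm{P_X(y_1) - P_X(y_2)} \le \norm{y_1 - y_2}$; this is obtained from the variational characterization of $P_X$ together with the convexity of $X$. Since $F$ is continuous on $X$ by hypothesis, the map $x \mapsto x - \alpha F(x)$ is continuous, and therefore the composition $T = P_X \circ (\mathrm{id} - \alpha F)$ is continuous as well.

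With these two properties in hand, the argument closes quickly: $X \subset \mathbb{R}^n$ is compact (closed and bounded) and convex, and $T : X \to X$ is a continuous self-map, so Brouwer's fixed point theorem yields some $x^* \in X$ with $T(x^*) = x^*$. Invoking Proposition~\ref{fpppro} once more, this fixed point $x^*$ solves $\mathrm{VI(F,X)}$, which is exactly the claimed existence.

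The step I expect to be the main obstacle, or at least the one carrying the real content, is the nonexpansiveness (hence continuity) of the projection operator $P_X$. This is where the convexity hypothesis on $X$ is genuinely used: for a nonconvex $X$ the projection may fail to be single-valued or continuous, and the continuous self-map needed for Brouwer would break down. Compactness of $X$ then enters only at the final step to supply the hypotheses of Brouwer's theorem. Everything else is routine once the fixed point reformulation of Proposition~\ref{fpppro} is available.
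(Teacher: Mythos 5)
Your argument is correct and is exactly the standard proof of this existence result: the paper itself states the theorem as a cited preliminary from \cite{nagurney2012projected} without reproving it, and the proof in that reference is precisely your reduction to a fixed point of $P_X(x-\alpha F(x))$ followed by Brouwer's theorem, using nonexpansiveness of the projection for continuity. The only hypothesis worth making explicit is that $X$ is nonempty (implicit in the statement), which you already invoke when asserting that $P_X$ is well defined.
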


%%%% monotonicity
The monotonicity properties of $F$ required in this paper are stated below:
%%%% monotonicity of F
\begin{definition}{(Monotone Map,\cite{karamardian1990seven})}\label{def1.1}\\
	A mapping $F$ is monotone on $X \subseteq \mathbb{R}^n$, if for every pair of distinct points $x,y\in X$, we have 
	\begin{equation}
	(y-x)^T(F(y)-F(x)) \geq 0.
	\end{equation}  
\end{definition}

%%%% strong monotonicity of F
\begin{definition}{(Strongly Monotone Map,\cite{karamardian1990seven})}\label{def1.3}\\
	A mapping $F$ is strongly monotone on $X \subseteq \mathbb{R}^n$, if there exists $\mu>0$ such that, for every pair of distinct points $x,y\in X$, we have 
	\begin{equation}
	(y-x)^T(F(y)-F(x)) \geq \mu \norm{x-y}^2.
	\end{equation}  
\end{definition}

%%%% Jacobian positive definiteness
The relation between monotonicity of $F$ and positive definiteness of its Jacobian matrix
\begin{align}
\nabla F(x)=\Bigg (\frac{\partial F_i(x)}{\partial x_j}\Bigg)_{i,j=1,2,\ldots,n},
\end{align}
as given below.

\begin{proposition}((Strongly) Positive Definite Jacobian of $F(x)$ implies (Strongly) Monotone $F(x)$,\cite{nagurney2012projected})\label{monoto}\\
	Suppose that $F$ is continuously differentiable on $X$.
	\begin{enumerate}
		\item If the Jacobian matrix $\nabla F(x)$ is positive semidefinite, i.e., 
		\begin{align}
		y^T \nabla F(x) y \geq 0, \forall y \in \mathbb{R}^n, x \in X,
		\end{align}
		then $F$ is monotone on $X$.
		\item If the Jacobian matrix $\nabla F(x)$ is positive definite, i.e., 
		\begin{align}
		y^T \nabla F(x) y > 0, \forall y \in \mathbb{R}^n, x \in X,
		\end{align}
		then $F$ is strictly monotone on $X$.
		\item If $\nabla F(x)$ is strongly positive definite, i.e., 
		\begin{align}
		y^T\nabla F(x) y \geq \mu \norm{z}^2,\forall y \in \mathbb{R}^n, x \in X,
		\end{align}
		then $F(x)$ is strongly monotone on $X$.
	\end{enumerate}	
\end{proposition}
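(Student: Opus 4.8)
The plan is to prove all three parts simultaneously by applying the fundamental theorem of calculus to $F$ along the straight line segment joining two points, and then to bound the resulting integrand pointwise using the respective definiteness hypothesis. Fix a pair of distinct points $x, y \in X$. Since $X$ is convex, the segment $x + t(y-x)$ with $t \in [0,1]$ stays inside $X$, so the continuously differentiable map $t \mapsto F(x + t(y-x))$ is well defined on $[0,1]$. First I would use the chain rule to compute its derivative as $\nabla F(x + t(y-x))(y-x)$, and then integrate over $[0,1]$ to obtain the identity
\begin{equation}
F(y) - F(x) = \int_0^1 \nabla F(x + t(y-x))(y-x)\, dt.
\end{equation}

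Next I would left-multiply by the constant row vector $(y-x)^T$ and pull it inside the integral, yielding
\begin{equation}
(y-x)^T\big(F(y) - F(x)\big) = \int_0^1 (y-x)^T \nabla F(x + t(y-x))(y-x)\, dt.
\end{equation}
The three claims then follow by estimating the integrand. For part (1), positive semidefiniteness of $\nabla F$ makes the integrand nonnegative for every $t \in [0,1]$, hence the integral is nonnegative and $F$ is monotone in the sense of Definition \ref{def1.1}. For part (3), strong positive definiteness with constant $\mu$ bounds the integrand below by $\mu \norm{y-x}^2$, a quantity independent of $t$; integrating over the unit interval gives $(y-x)^T(F(y)-F(x)) \geq \mu \norm{y-x}^2$, which is precisely strong monotonicity as in Definition \ref{def1.3}.

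For part (2), positive definiteness of $\nabla F$ guarantees that the integrand is strictly positive for each $t \in [0,1]$ whenever $y \neq x$, because $y - x \neq 0$; since the integrand is continuous in $t$, its integral over $[0,1]$ is strictly positive, establishing strict monotonicity. The hard part here is not the idea but the rigor of the integral representation: I would need to verify that continuous differentiability of $F$ on the convex set $X$ genuinely licenses both the chain rule and the fundamental theorem of calculus along the segment, and to confirm that the segment never leaves $X$, so that $\nabla F$ is evaluated only where it is defined and where the definiteness hypotheses are assumed to hold. Once this representation is secured, each of the three conclusions reduces to an immediate pointwise estimate under the integral sign.
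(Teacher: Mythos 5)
Your proof is correct and is the standard argument: the paper itself states this proposition without proof (it is quoted from \cite{nagurney2012projected}), and the line-integral representation of $F(y)-F(x)$ along the segment joining $x$ and $y$ (legitimate here because $X$ is a convex set throughout the paper's setting) followed by the pointwise estimate of the quadratic form under the integral sign is precisely the classical proof given in that reference. The only point worth flagging is cosmetic: the $\norm{z}^2$ appearing in part (3) of the statement is a typo for $\norm{y}^2$, which your argument implicitly and correctly repairs.
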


%%%% strongly convex
\begin{proposition}(Strongly Monotone $\nabla f$ implies Strongly Convex $f$,{\cite{karamardian1990seven}})\label{prop1}\\
	Let $f$ be a differentiable function on $D \subseteq \mathbb{R}^n$ that contains $X$. Then, $f$ strongly convex if and only if $\nabla f$ is strongly monotone on $X$. 
\end{proposition}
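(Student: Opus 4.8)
The plan is to prove the two implications separately, working throughout with the first-order characterization of strong convexity rather than a Hessian-based argument, since $f$ is assumed only differentiable. Recall that for a differentiable $f$, strong convexity with modulus $\mu>0$ is equivalent to the condition
\begin{equation}
f(y) \geq f(x) + \nabla f(x)^T(y-x) + \frac{\mu}{2}\norm{y-x}^2, \quad \forall x,y \in X. \label{foc}
\end{equation}
I would either adopt \eqref{foc} as the working definition or first record its equivalence with the convex-combination definition of strong convexity, so that the final statement can be phrased in whichever form the rest of the paper uses.

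For the direction that strong convexity implies strong monotonicity, I would write \eqref{foc} once as stated and once with the roles of $x$ and $y$ interchanged, giving
\begin{equation}
f(x) \geq f(y) + \nabla f(y)^T(x-y) + \frac{\mu}{2}\norm{x-y}^2.
\end{equation}
Adding the two inequalities cancels $f(x)+f(y)$ from both sides and, after grouping the gradient terms, leaves
\begin{equation}
(y-x)^T\big(\nabla f(y) - \nabla f(x)\big) \geq \mu \norm{y-x}^2,
\end{equation}
which is exactly the strong monotonicity of $\nabla f$ on $X$ in the sense of Definition~\ref{def1.3}. This direction is purely algebraic and immediate.

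For the converse, I would assume $\nabla f$ strongly monotone, fix $x,y \in X$, and represent the gap via the fundamental theorem of calculus along the segment $x + t(y-x)$, $t\in[0,1]$:
\begin{equation}
f(y) - f(x) - \nabla f(x)^T(y-x) = \int_0^1 \big(\nabla f(x + t(y-x)) - \nabla f(x)\big)^T(y-x)\, dt.
\end{equation}
Applying strong monotonicity to the pair $x+t(y-x)$ and $x$ yields $t\,\big(\nabla f(x+t(y-x)) - \nabla f(x)\big)^T(y-x) \geq \mu t^2 \norm{y-x}^2$; dividing by $t>0$ and integrating over $[0,1]$ produces the lower bound $\tfrac{\mu}{2}\norm{y-x}^2$, which is precisely \eqref{foc}. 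Convexity of $X$ guarantees the entire segment lies in $X$, so the monotonicity estimate is legitimate at every $t$.

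I expect the converse to be the main obstacle, and within it the care needed to pass from the pointwise strong-monotonicity estimate along the segment to the integral bound — specifically the division by $t$ and the integration of $\mu t$ to obtain the factor $\tfrac{\mu}{2}$. A secondary point to settle cleanly is the equivalence between the first-order condition \eqref{foc} and the convex-combination definition of strong convexity, which ensures the conclusion is valid under the bare differentiability hypothesis assumed in the statement.
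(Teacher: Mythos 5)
Your proof is correct. Note that the paper does not actually prove this proposition --- it is quoted from the cited reference \cite{karamardian1990seven} and used as a black box --- so there is no in-paper argument to compare against; your two-step proof (adding the first-order strong-convexity inequality to its copy with $x$ and $y$ interchanged for one direction, and the fundamental-theorem-of-calculus representation along the segment combined with the pointwise strong-monotonicity estimate for the converse) is the standard and complete one. One small simplification: the paper's Definition~\ref{strong_conv_def} already \emph{defines} strong convexity by the first-order inequality \eqref{scvx}, so the preliminary step you flag about reconciling that inequality with the convex-combination definition is unnecessary here --- you can take \eqref{scvx} as the definition outright. The only hypotheses you genuinely need to invoke are convexity of $X$ (so the segment $x+t(y-x)$ stays in $X$ and the monotonicity estimate applies at each $t$), which the paper assumes throughout, and the harmless observation that the division by $t$ is only performed for $t>0$, the endpoint $t=0$ being irrelevant to the integral.
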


\begin{definition}(Strongly Convex Function\cite{karamardian1990seven})\label{strong_conv_def}\\
	A differentiable function $f$ is strongly convex on a domain $D\subseteq \mathbb{R}^n$ of $X$, if there exists $\mu>0$ such that, for all $x,y\in D$, 
	\begin{align}
	f(y)-f(x)\geq (y-x)^T\nabla f(x)+\frac{\mu}{2}\norm{y-x}^2.\label{scvx}
	\end{align}
	$f$ is strongly concave, if $-f$ is strongly convex.
\end{definition}

%%%% unique solution
\begin{theorem}(Uniqueness of the Solution to Variational Inequality, \cite{nagurney2012projected})\label{uniqueo}\\
	Suppose that $F(x)$ is strongly monotone on $X$. Then there exists precisely one solution $x^*$ to $\mathrm{VI(F,X)}$.
\end{theorem}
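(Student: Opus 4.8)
The plan is to split the statement into its two assertions---uniqueness and existence---and dispatch them separately, since strong monotonicity pays for each in a different way. I would begin with uniqueness, the quick half. Suppose for contradiction that $x^*$ and $x^{**}$ are two distinct solutions of $\mathrm{VI(F,X)}$. Testing the defining inequality \eqref{vip} for $x^*$ against the feasible point $x^{**}$, and the inequality for $x^{**}$ against $x^*$, gives $(x^{**}-x^*)^T F(x^*)\geq 0$ and $(x^*-x^{**})^T F(x^{**})\geq 0$. Adding these and rearranging yields $(x^{**}-x^*)^T\big(F(x^{**})-F(x^*)\big)\leq 0$, which flatly contradicts Definition \ref{def1.3}, since strong monotonicity forces this quantity to be at least $\mu\norm{x^*-x^{**}}^2>0$ for distinct points. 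Hence the solution, if it exists, is unique.

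For existence I would reduce the (possibly unbounded) set $X$ to the compact case covered by Theorem \ref{thm2.1nagu} through a truncation argument, using strong monotonicity to supply the needed coercivity. Assuming $F$ is continuous, fix any reference point $x_0\in X$ and, for radius $R>0$, set $X_R := X\cap \overline{B}(0,R)$, a compact convex set. For $R$ large enough that $x_0\in X_R$, Theorem \ref{thm2.1nagu} furnishes a solution $x_R\in X_R$ of $\mathrm{VI(F,X_R)}$. The key is an a priori bound on $x_R$ independent of $R$: choosing the test point $x_0$ in the truncated inequality for $x_R$ gives $(x_R-x_0)^T F(x_R)\leq 0$, while strong monotonicity gives $(x_R-x_0)^T F(x_R)\geq (x_R-x_0)^T F(x_0)+\mu\norm{x_R-x_0}^2$. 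Combining the two and applying Cauchy--Schwarz collapses to $\mu\norm{x_R-x_0}\leq\norm{F(x_0)}$, so $\norm{x_R}\leq \norm{x_0}+\norm{F(x_0)}/\mu=:M$, a bound free of $R$.

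I would then choose $R>M$, so that $x_R$ lies strictly inside the ball $\overline{B}(0,R)$. A short perturbation argument promotes $x_R$ from a solution of the truncated problem to a solution of the full problem: for arbitrary $x\in X$ and small $t\in(0,1]$ the point $x_R+t(x-x_R)$ stays in $X_R$ by convexity and interiority, so the truncated inequality evaluated there, divided by $t$, yields $(x-x_R)^T F(x_R)\geq 0$ for every $x\in X$. Thus $x_R$ solves $\mathrm{VI(F,X)}$, completing existence and, with the first part, the theorem.

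The main obstacle is the existence half on an unbounded $X$: the compact existence result of Theorem \ref{thm2.1nagu} does not apply directly, and the entire burden falls on extracting coercivity from strong monotonicity to confine the truncated solutions $x_R$ to a fixed ball. The uniqueness half, by contrast, is essentially immediate from the definitions. One caveat worth flagging is that existence additionally requires continuity of $F$ in order to invoke Theorem \ref{thm2.1nagu}; strong monotonicity alone secures only uniqueness.
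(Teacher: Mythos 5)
Your argument is correct, but note that the paper itself offers no proof of this statement: it is imported verbatim from \cite{nagurney2012projected} as a preliminary, so there is no in-paper argument to compare against. What you have written is essentially the standard textbook proof underlying that citation --- uniqueness by pairing the two variational inequalities and contradicting strong monotonicity, existence by truncating $X$ to $X_R = X \cap \overline{B}(0,R)$, invoking the compact existence theorem, and using the coercivity supplied by strong monotonicity to obtain an $R$-independent bound that lets the truncated solution solve the full problem. Your closing caveat is also well taken: as stated in the paper the theorem omits the continuity hypothesis on $F$ that the existence half genuinely requires (the cited source assumes $F$ continuous throughout), whereas strong monotonicity alone yields only the uniqueness half.
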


Consider the following globally projected dynamical system proposed in \cite{friesz1994day}, denoted here as $\mathrm{PDS(F,X)}$:
\begin{align}
\dot{x}=\beta\{P_{X}[x-\alpha F(x)]-x\}\label{frz}
\end{align}
where $k,\alpha$ are positive constants and $P_{X}:\mathbb{R}^{n}\rightarrow X$ is a projection operator as defined in \eqref{projection}.

\begin{remark}(Equivalence between $\mathrm{PDS(F,X)}$ and $\mathrm{VI(F,X)}$, \cite{gao2003exponential})\label{remark3}\\
	$x^*$ is an equilibrium point of the $\mathrm{PDS(F,X)}$, \eqref{frz} if and only if $x^*$ is a solution of the variational inequality problem $\mathrm{VI(F,X)}$ defined in \eqref{vip}.
\end{remark}
From Remark \ref{remark3}, 
\begin{align}
\dot{x}=0\implies x^* = P_X(x^*-\alpha F(x^*)).
\end{align}
By using Proposition \ref{fpppro}, the result is immediate. The $\mathrm{PDS(F,X)}$ shares connection with the $\mathrm{VI(F,X)}$ as described below:
\begin{align}
\mathrm{PDS(F,X)} \implies \mathrm{FPP(F,X)} \implies \mathrm{VI(F,X)}.\label{r3}
\end{align}
\begin{lemma}\label{lemma1.3}{\cite{xia2000stability}}
	Assume that $F$ is locally Lipschitz continuous in a domain $D$ that contains $X$. Then the solution $x(t)$ of \eqref{frz} will approach exponentially the feasible set $X$ when the initial point $x^0\notin X$. Moreover, if $x^0\in X$, then $x(t) \in X$.
\end{lemma}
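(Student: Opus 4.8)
The plan is to certify the exponential approach to $X$ via the squared distance to the feasible set and to obtain invariance as a degenerate case of the same estimate. I would take as Lyapunov candidate
\begin{equation}
V(x)=\tfrac{1}{2}\,\mathrm{dist}(x,X)^2=\tfrac{1}{2}\norm{x-P_X(x)}^2,
\end{equation}
which is well defined since $X$ is closed and convex, so that $P_X$ is single valued and nonexpansive. As a preliminary I would note that \eqref{frz} has a unique local solution: $P_X$ is globally Lipschitz with constant one and $F$ is locally Lipschitz on $D$, so the right-hand side $\beta\{P_X[x-\alpha F(x)]-x\}$ is locally Lipschitz and Picard--Lindel\"of applies.

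The analytic fact I would invoke is that $V$ is continuously differentiable with $\nabla V(x)=x-P_X(x)$ (the standard property of the squared distance to a convex set, i.e.\ the Moreau envelope of the indicator of $X$), even though $P_X$ itself need not be differentiable. Writing $p:=P_X(x)$ and $g:=P_X[x-\alpha F(x)]$ and differentiating along a solution gives
\begin{equation}
\dot V=(x-p)^T\dot x=\beta\,(x-p)^T(g-x)=\beta\big[(x-p)^T(g-p)-\norm{x-p}^2\big].
\end{equation}

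The decisive step is the variational characterization of the projection: since $p=P_X(x)$, one has $(x-p)^T(z-p)\le 0$ for every $z\in X$. Because $g=P_X[x-\alpha F(x)]\in X$, the choice $z=g$ yields $(x-p)^T(g-p)\le 0$, whence
\begin{equation}
\dot V\le -\beta\norm{x-p}^2=-2\beta V.
\end{equation}
A Gr\"onwall/comparison argument then gives $V(x(t))\le V(x^0)e^{-2\beta t}$, i.e.\ $\mathrm{dist}(x(t),X)\le \mathrm{dist}(x^0,X)\,e^{-\beta t}$, which is the claimed exponential approach to $X$ when $x^0\notin X$. The invariance assertion is an immediate corollary: if $x^0\in X$ then $V(x^0)=0$, so $0\le V(x(t))\le 0$ forces $\mathrm{dist}(x(t),X)=0$, hence $x(t)\in X$ for all $t$.

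The part I expect to demand the most care is the justification that $V$ is $C^1$ with the stated gradient and the legitimacy of differentiating $V$ along a trajectory whose velocity depends on the only-Lipschitz map $g(\cdot)$. To keep the argument self-contained I would, if needed, bound the upper Dini derivative of $t\mapsto V(x(t))$ directly, using nonexpansiveness of $P_X$ together with the projection inequality above; this circumvents differentiability of $P_X$ while still producing $D^+V\le-2\beta V$, and the comparison lemma yields the same exponential bound. Everything else reduces to the two elementary properties of $P_X$ (its nonexpansiveness and its variational characterization), so no structural assumption on $F$ beyond local Lipschitz continuity is required.
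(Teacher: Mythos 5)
Your proposal is correct, and the paper itself offers no proof of this lemma---it is imported verbatim from the cited reference \cite{xia2000stability}, whose argument is essentially the one you give: differentiate $\tfrac12\norm{x-P_X(x)}^2$ along trajectories, use the variational characterization $(x-P_X(x))^T(z-P_X(x))\le 0$ for $z\in X$ applied to $z=P_X[x-\alpha F(x)]$, and conclude $\dot V\le -2\beta V$. Your attention to the $C^1$ smoothness of the squared distance function (with the Dini-derivative fallback) and to local existence via Picard--Lindel\"of is exactly the right care to take, so nothing is missing.
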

\begin{remark}\cite{gao2003exponential}
	Let $X$ be a nonempty, closed convex set, then the following holds:\begin{align}
	[x-P_X(x)]^T[P_X(x)-y]\geq 0,\forall y \in X, x \in \mathbb{R}^{n}. \label{bapro}
	\end{align}
\end{remark}

Given a set $X \subset \mathbb{R}^n$ and $x \in X$, a vector $v \in \mathbb{R}^n$ is called an inward tangent vector of $X$ at $x$ if there exist a smooth curve $\gamma:[0,\xi]\rightarrow X$ such that $\xi \geq 0$, $\gamma(0)=x$, and $\gamma'_+(0)=v$.

The set of all inward tangent vectors is the tangent cone of $X$ at $x$ and denoted by $T_xX$.

Denote the boundary and interior of $X$, respectively, by $\partial X$ and $X^0$. If $x \in \partial X$, the set of inward normals to $X$ at $x$ is defined as the dual cone of $T_xX$, as follows:

\begin{align}
N_xX = \{\eta:\norm{\eta}=1|\langle \eta^T,x-x' \rangle \leq 0, \forall x' \in T_xX\}.
\end{align}
From Definition \ref{vi_Def}, the necessary and sufficient condition for $x^*$ to be a solution to $\mathrm{VI(F,X)}$\cite{nagurney2012projected}, is that
\begin{align*}
-F(x^*) \in N_{x^*}X.
\end{align*}

\subsubsection{Riemannian manifold}
Let $\mathcal{M}$ be a differential manifold endowed with Riemannian metric $r$. Let $\mathcal{S}(\mathcal{M})$ denote the space of vector fields over $\mathcal{M}$. The tangent space of $\mathcal{M}$ at some point $x\in \mathcal{M}$ be $T_x\mathcal{M}$.

An inner product $r$ is defined as $\langle u,v \rangle_r:=r(u,v)$. In matrix form, $\langle u,v \rangle_r=u^TRv$ where $R$ is symmetric positive definite. The $2-$norm induced by $r$ is written as $\norm{.}_r$ such that $\norm{v}=\sqrt{\langle v,v \rangle_r}$.
$r_x:T_x\mathcal{M} \times T_x\mathcal{M}\rightarrow \mathbb{R}$ is a smoothly chosen inner product on the tangent space $T_x\mathcal{M}$ of $\mathcal{M}$. For each $x \in \mathcal{M}$, $r = r_x$, satisfies the following:
\begin{enumerate}
	\item $r(x_1,x_2)=r(x_2,x_1),\forall x_1,x_2 \in T_x\mathcal{M}$
	\item $r(x,x)>0,\forall x\in T_x\mathcal{M}$
	\item $r(x,x) = 0$ if and only if $x=0$.
\end{enumerate}

\begin{definition}(Riemannian Metric,\cite{rockafellar2009variational})\label{def1.7}\\
	Given a set $X\subseteq \mathbb{R}^n$, a Riemannian metric is a map $r:X\rightarrow L^n_2$ that assigns to every point $x \in X$ an inner product $\langle.,.\rangle_{r(x)}$. A metric is Lipschitz continuous if it is continuous as a map from $X$ to $L^n_2$. 
\end{definition}

If there exists a smooth function $f:\mathcal{M}\rightarrow \mathbb{R}$, and $x\in \mathcal{M}$, the differential $D_xf:T_x\mathcal{M}\rightarrow \mathbb{R}$ is defined as
\begin{align*}
D_xf(v)=(f \circ \gamma)'(v)
\end{align*}
where $\gamma(-\xi,\xi)$ is a smooth curve with $\gamma(0)=x$, and $\gamma'(0)=v$, $v\in T_x\mathcal{M}$.
The gradient of $f$ at $x\in \mathcal{M}$ is defined as the unique tangent vector $\mathrm{grad}f$ such that
\begin{align*}
\langle \mathrm{grad}f,v\rangle_r=D_xf(v)\forall v \in T_x\mathcal{M}.
\end{align*}
In matrix notation, the gradient $\mathrm{grad}_rf=R^{-1}\nabla{f}^T$.
A vector field $F$ is a map that assigns a vector $F(x)\in T_x\mathcal{M}$ to every point $x\in \mathcal{M}$.

Let $\nabla$ be a linear (Levi-Civita) connection and $Y$ be a $C^\infty$ vector field on $\mathcal{M}$, respectively. Then the connection $\nabla$ induces a covariant derivative with respect to $Y$, denoted by $\nabla_Y$.

The differential of $F\in \mathcal{S}(\mathcal{M})$ is a linear operator $\mathbb{H}_F:\mathcal{S}(\mathcal{M})\rightarrow\mathcal{S}(\mathcal{M})$, given by $\mathbb{H}_F(Y):=\nabla_Y(F)$. The linear map $\mathbb{H}_F(x):T_x\mathcal{M}\rightarrow\mathcal{M}$ assigned to each point $x\in \mathcal{M}$ is defined by
$\mathbb{H}_F(x)v=\nabla_vF, \forall v \in T_x\mathcal{M}$.

If $F = \mathrm{grad}f$, where $f:\mathcal{M}\rightarrow \mathbb{R}$, then $\mathbb{H}_F(x)$ denotes the Hessian of $f$ at $x$.

\begin{proposition}(Generalization of Strongly Monotone Vector-valued Function on a Riemannian Manifold ,\cite{da2002contributions})\label{prop1.5}\\
	Let $\mathcal{M}$ be a Riemannian manifold and let $F$ be a vector field on $\mathcal{M}$. Then
	$F$ is strongly monotone if and only if there exists $\nu>0$ such that $\langle \mathbb{H}_{F(x)}v,v\rangle_r \geq \nu\norm{v}^2_r$ for any $x\in \mathcal{M}$ and $v \in T_x\mathcal{M}$.
\end{proposition}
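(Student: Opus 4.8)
The plan is to reduce the Riemannian statement to a one--dimensional calculus identity along geodesics, exactly as the Euclidean Proposition~\ref{monoto} reduces strong monotonicity to positive definiteness of the Jacobian via the fundamental theorem of calculus along the segment $t\mapsto x+t(y-x)$. The first step is to fix the notion of strong monotonicity on $\mathcal{M}$: the geodesic generalization of Definition~\ref{def1.3} states that $F$ is strongly monotone if there is $\nu>0$ such that for every geodesic $\gamma:[0,1]\to\mathcal{M}$,
\begin{align}
\langle F(\gamma(1)),\gamma'(1)\rangle_r-\langle F(\gamma(0)),\gamma'(0)\rangle_r\geq \nu\norm{\gamma'(0)}_r^2.
\end{align}
When $\mathcal{M}=\mathbb{R}^n$ with the Euclidean metric, geodesics are straight lines $\gamma(t)=x+t(y-x)$ with constant velocity $y-x$, and this reduces to $(y-x)^T(F(y)-F(x))\geq\nu\norm{y-x}^2$, recovering Definition~\ref{def1.3}. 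The object I would differentiate is the scalar function $h(t):=\langle F(\gamma(t)),\gamma'(t)\rangle_r$ evaluated along a geodesic.

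The crux of the argument is the identity $h'(t)=\langle\mathbb{H}_F(\gamma(t))\gamma'(t),\gamma'(t)\rangle_r$. To obtain it I would use two structural facts about the Levi--Civita connection $\nabla$. First, metric compatibility gives the product rule $h'(t)=\langle\nabla_{\gamma'(t)}F,\gamma'(t)\rangle_r+\langle F(\gamma(t)),\nabla_{\gamma'(t)}\gamma'(t)\rangle_r$. Second, because $\gamma$ is a geodesic its acceleration vanishes, $\nabla_{\gamma'(t)}\gamma'(t)=0$, which kills the second term. Recalling the definition $\mathbb{H}_F(x)v=\nabla_vF$, the surviving term is precisely $\langle\mathbb{H}_F(\gamma(t))\gamma'(t),\gamma'(t)\rangle_r$. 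This single computation carries all the geometry; the rest is one--variable analysis.

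For the ``if'' direction, suppose $\langle\mathbb{H}_F(x)v,v\rangle_r\geq\nu\norm{v}_r^2$ for all $x$ and $v$. Integrating the identity along any geodesic,
\begin{align}
\langle F(\gamma(1)),\gamma'(1)\rangle_r-\langle F(\gamma(0)),\gamma'(0)\rangle_r=\int_0^1\langle\mathbb{H}_F(\gamma(t))\gamma'(t),\gamma'(t)\rangle_r\,dt\geq\int_0^1\nu\norm{\gamma'(t)}_r^2\,dt.
\end{align}
Since geodesics have constant speed, $\norm{\gamma'(t)}_r=\norm{\gamma'(0)}_r$, so the right-hand side equals $\nu\norm{\gamma'(0)}_r^2$ and strong monotonicity follows. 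For the ``only if'' direction I would fix $x$ and $v$, take the geodesic $\gamma$ with $\gamma(0)=x$, $\gamma'(0)=v$, and apply the monotonicity hypothesis to the rescaled curve $u\mapsto\gamma(su)$ on $[0,1]$ (an affine reparametrization of a geodesic is again a geodesic, with initial velocity $sv$). This yields $s\,(h(s)-h(0))\geq\nu s^2\norm{v}_r^2$; dividing by $s$ and letting $s\to0^+$ gives $h'(0)=\langle\mathbb{H}_F(x)v,v\rangle_r\geq\nu\norm{v}_r^2$, as required.

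I expect the main obstacle to be not the calculus but the geometric bookkeeping: verifying metric compatibility and the geodesic equation in the notation $\mathbb{H}_F(x)v=\nabla_vF$ the paper has set up, and ensuring that suitable connecting geodesics exist. The ``if'' direction for arbitrary pairs $x,y$ tacitly needs a geodesic joining them, so the cleanest statement assumes $\mathcal{M}$ is geodesically complete (or a Hadamard manifold), guaranteeing minimal geodesics via Hopf--Rinow; the ``only if'' direction uses only short geodesics and so needs nothing beyond local existence. A secondary point to handle carefully is the constant-speed property of geodesics, which is what converts the integral bound into the clean $\nu\norm{\gamma'(0)}_r^2$ term and thereby matches the Euclidean normalization.
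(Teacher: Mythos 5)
The paper does not prove this proposition; it is imported verbatim as a preliminary from the cited reference \cite{da2002contributions}, so there is no in-paper argument to compare against. Your proof is correct and is essentially the standard one from that reference: the geodesic definition of strong monotonicity, the identity $h'(t)=\langle\mathbb{H}_F(\gamma(t))\gamma'(t),\gamma'(t)\rangle_r$ obtained from metric compatibility of the Levi--Civita connection plus $\nabla_{\gamma'}\gamma'=0$, integration along a connecting geodesic with constant speed for the ``if'' direction, and a short-geodesic difference quotient for the ``only if'' direction. Your two flagged caveats are exactly the right ones --- the statement as printed never defines strong monotonicity on $\mathcal{M}$ (you must supply the geodesic definition for the proposition to have content), and the ``if'' direction needs a geodesic joining arbitrary points, hence completeness or a Hadamard hypothesis, which the paper silently assumes.
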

\begin{proposition}(Generalization of Strongly Convex Real-valued Function on a Riemannian Manifold,\cite{da2002contributions})\label{prop1.6}\\
	Let $\mathcal{M}$ be a Riemannian manifold, then the function $f:\mathcal{M}\rightarrow \mathbb{R}$ is strongly convex if and only if the gradient vector field $F=\mathrm{grad}f$ on $\mathcal{M}$ is strongly monotone.
\end{proposition}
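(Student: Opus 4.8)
The plan is to prove this as the Riemannian counterpart of the Euclidean equivalence in Proposition \ref{prop1}, using the Hessian as the bridge between the two notions. Since $F=\mathrm{grad}f$, the linear operator $\mathbb{H}_F(x)$ assigned at each point is exactly the Riemannian Hessian of $f$. Proposition \ref{prop1.5} already characterizes strong monotonicity of $F$ by the existence of $\nu>0$ with $\langle \mathbb{H}_F(x)v,v\rangle_r \geq \nu\norm{v}_r^2$ for all $x\in\mathcal{M}$ and $v\in T_x\mathcal{M}$. Hence it suffices to show that $f$ is (geodesically) strongly convex if and only if its Hessian obeys this same lower bound. The key computational fact I would establish first is that, along any geodesic $\gamma$ with $\gamma(0)=x$ and $\dot\gamma(0)=v$, one has
\begin{align*}
\frac{d^2}{dt^2}(f\circ\gamma)(t)=\langle \mathbb{H}_F(\gamma(t))\dot\gamma(t),\dot\gamma(t)\rangle_r,
\end{align*}
which follows from the geodesic equation $\nabla_{\dot\gamma}\dot\gamma=0$ together with the compatibility of the Levi-Civita connection $\nabla$ with the metric $r$.

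For the forward direction I would assume $f$ strongly convex and restrict it to geodesics. Geodesic strong convexity means $f\circ\gamma$ is a one-dimensional strongly convex function with modulus $\nu$, so $(f\circ\gamma)''(t)\geq\nu\norm{\dot\gamma(t)}_r^2$. Invoking the displayed identity and evaluating at $t=0$, where the exponential map guarantees a geodesic realizing every prescribed pair $(x,v)$, yields $\langle\mathbb{H}_F(x)v,v\rangle_r\geq\nu\norm{v}_r^2$; Proposition \ref{prop1.5} then delivers strong monotonicity of $F$.

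For the converse I would assume $F$ strongly monotone, so Proposition \ref{prop1.5} supplies the Hessian bound $\langle\mathbb{H}_F(x)v,v\rangle_r\geq\nu\norm{v}_r^2$. Fixing $x,y\in\mathcal{M}$ and joining them by a minimizing geodesic $\gamma:[0,1]\to\mathcal{M}$ with $\gamma(0)=x$ and $\gamma(1)=y$, the scalar function $\phi(t)=f(\gamma(t))$ satisfies $\phi''(t)\geq\nu\norm{\dot\gamma(t)}_r^2$. A second-order Taylor expansion of $\phi$ with integral remainder then reproduces, on the manifold, the strong-convexity inequality \eqref{scvx} with the Euclidean inner product replaced by $\langle\cdot,\cdot\rangle_r$ and the secant $y-x$ replaced by $\dot\gamma(0)$. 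This is precisely geodesic strong convexity of $f$, closing the equivalence.

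The main obstacle I anticipate is the geometric bookkeeping rather than the one-dimensional calculus. Establishing the second-derivative identity rigorously requires the geodesic equation and metric compatibility, and the converse direction needs existence of a (minimizing) geodesic between arbitrary points, i.e. geodesic completeness or at least geodesic convexity of the domain on which the claim is asserted. One must also keep the modulus $\nu$ measured consistently with respect to $\norm{\cdot}_r$ in both definitions, so that the same constant transfers between the convexity and monotonicity inequalities; once the geodesic framework is in place, the remaining argument is the standard one-dimensional equivalence between a second-derivative lower bound and strong convexity.
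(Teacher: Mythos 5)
The paper itself offers no proof of Proposition \ref{prop1.6}: it is imported verbatim from \cite{da2002contributions} as a preliminary, so there is no ``paper's approach'' to compare against. Judged on its own, your argument is correct and is the standard route to this equivalence: reduce both notions to the second-order condition $\langle \mathbb{H}_F(x)v,v\rangle_r \geq \nu\norm{v}^2_r$, using Proposition \ref{prop1.5} on the monotonicity side and the identity $(f\circ\gamma)''(t)=\langle \mathbb{H}_F(\gamma(t))\dot\gamma(t),\dot\gamma(t)\rangle_r$ along geodesics (valid by $\nabla_{\dot\gamma}\dot\gamma=0$ and metric compatibility) on the convexity side. The one-dimensional calculus then transfers the modulus $\nu$ in both directions. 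The reference itself argues more directly from the parallel-transport definition of a strongly monotone vector field via the first derivative of $t\mapsto\langle F(\gamma(t)),\dot\gamma(t)\rangle_r$, but since the paper hands you the Hessian characterization as Proposition \ref{prop1.5}, your second-order route is the natural one here and proves the same thing. The caveats you flag are the genuine ones and should be stated as hypotheses rather than worries: the claim needs $\mathcal{M}$ (or the relevant domain) to be geodesically convex or complete so that a minimizing geodesic joins any two points, the notion of strong convexity in Definition \ref{strong_conv_def} must be read geodesically (secant $y-x$ replaced by $\dot\gamma(0)$ and $\norm{y-x}$ by the Riemannian distance, using that geodesics have constant speed so $\norm{\dot\gamma(t)}_r=\norm{\dot\gamma(0)}_r$), and $f$ must be $C^2$ so that $\mathbb{H}_F$ exists. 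With those made explicit, the proof is complete.
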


\section{Saddle-point problem as a variational inequality and projected primal-dual dynamics} \label{pf}Consider the following constrained optimization problem
\begin{equation}
\setlength\arraycolsep{1.5pt}
\begin{array}{cc}
\mathrm{minimize} & f(x)\\
\mathrm{subject~to} & x \in X
\end{array} \label{cvx}
\end{equation}
where 
\begin{align}
X &=\{x\in \mathbb{R}^n|g_i(x)\leq 0,\forall^m_{i=1}\},\label{constr_set} 
\end{align} is the domain of the problem \eqref{cvx}.
The functions $f:\mathbb{R}^n\rightarrow\mathbb{R}$, $g:\mathbb{R}^n \rightarrow \mathbb{R}^m$ are assumed to be continuously differentiable $(\mathcal{C}^2)$ with respect to $x$, with the following assumptions:
\begin{assumption}\label{ass1}
	$\nabla f: \mathbb{R}^n\rightarrow \mathbb{R}^n$ is strongly monotone on $X$, with $\mu>0$  such that the following holds:
	\begin{equation}
	(x_1-x_2)^T(\nabla f(x_1)-\nabla f(x_2)) \geq \mu \norm{x_1-x_2}^2.
	\end{equation}  
\end{assumption} 
As a consequence of Assumptions \ref{ass1}, it is derived that the objective function $f$ is strongly convex in $x$ with the modulus of convexity given by $\frac{\mu}{2}$.

\begin{assumption}\label{ass2}
	Constraints $g_i(x)$ are convex in $x,\forall^m_{i=1}$.
\end{assumption} 
\begin{assumption}\label{ass3}
	There exists an $x \in \mathrm{relint}X$ such that $g_i(x)<0, \forall^{m}_{i=1}$.
\end{assumption}
Assumptions \eqref{ass1}-\eqref{ass3} ensure that $x$ is strictly feasible and strong duality holds for the optimization problem \eqref{cvx}.
\begin{remark}
	Note that the gradient function $\nabla g_i(x)$ need not be monotone.
\end{remark}
Let $L:\mathbb{R}^n \times \mathbb{R}^m \rightarrow \mathbb{R}$ define the \textit{Lagrangian} function of the optimization problem \eqref{cvx} as given below
\begin{align}
L(x,\lambda)=f(x)+\lambda^Tg(x). \label{lg}
\end{align}
Let $\lambda_i$ be the Lagrange multipliers associated with $g_i(x)$, then $\lambda\in \Lambda \subseteq \mathbb{R}^m_+=\{\lambda \in \mathbb{R}^m,\lambda_i \geq 0, \forall^m_{i = 1}\}$ defines the corresponding vectors of Lagrange multipliers.

% not required here
%\begin{remark}
%    From Proposition \eqref{prop1} and Assumption \eqref{ass1}, $f$ is strongly convex in $x$, and as a result, $L(x,\lambda)$ is also strongly convex in $x$.
%\end{remark}
The Lagrangian function $L$ defined in \eqref{lg} is differentiable convex-concave in $x$ and $\lambda$ respectively, i.e., $L(.,\lambda)$ is convex for all $\lambda\in \Lambda$ and $L(x,.)$ is concave for all $x \in X$. 

The saddle-point problem, denoted as $\mathrm{LP}(L,\Omega)$ (not to be confused with a linear programming problem usually designated as $\mathrm{LP}$), finds a pair $x^*\in X$ and $\lambda^*\in \Lambda$ such that the following holds:
\begin{align}
L(x^*,\lambda)\leq L(x^*,\lambda^*)\leq L(x,\lambda^*).\label{spp}
\end{align}
If $x^*$ is the unique minimizer of $L$, then it must satisfy the Karush-Kuhn-Tucker (KKT) conditions stated as follows.
\begin{align}
g_i(x^*)&\leq 0,~\forall^{m}_{i=1} \label{k1}\\
\lambda^*_i&\geq 0,~\forall^{m}_{i=1} \label{k2}\\
\lambda^*_i g_i(x^*)&=0,~\forall^{m}_{i=1} \label{k3}\\
\nabla f(x^*)+\lambda^{*T}\nabla g(x^*)&=0. \label{k5}
\end{align}

Let us define $z = (x,\lambda) \in \Omega = X \times \Lambda$ then $z^*=(x^*,\lambda^*)$ is the saddle point of the Lagrangian function defined in \eqref{lg}.

\begin{remark}
	Since strong duality holds, the KKT conditions \eqref{k1}-\eqref{k5} are necessary and sufficient to guarantee optimality of the problem \eqref{cvx}, with $x^*$ as the unique minimizer of \eqref{cvx} and $z^*$ as the unique saddle-point of \eqref{lg}.
\end{remark} 
Let $G:\mathbb{R}^n \times \mathbb{R}^m \rightarrow\mathbb{R}^{n+m}$ define the gradient map of \eqref{lg} as given below:
\begin{align}
G(z)&=\nabla_z L\\
&=\begin{bmatrix}
\nabla_x L(x,\lambda,\gamma)\\
-\nabla_\lambda L(x,\lambda,\gamma)
\end{bmatrix} \label{gu}
\end{align}
%where $z \in \Omega$ with $\Omega \subseteq X \times \Lambda$. It is easy to show that $\Omega \subseteq \mathbb{R}^{n+m}$.
While the primal-dual dynamics for an unconstrained optimization problem corresponds to simply $\dot{z}=-\nabla_z L = -G(z)$, the PD dynamics corresponding to the constrained optimization problem \eqref{cvx}-\eqref{constr_set} is given by:
\begin{align}
\dot{z}=\begin{bmatrix}
-\nabla_x L(x,\lambda,\gamma)\\
[\nabla_\lambda L(x,\lambda,\gamma)]^+_\lambda
\end{bmatrix} \label{pdd}
\end{align}

It is well known that the solution of the PD dynamics \eqref{pdd} coincides with the saddle point $z^*$ of the saddle-point problem \eqref{spp}\cite{arrow1958studies,feijer2010stability}.
%The recent articles \cite{feijer2010stability,cherukuri2016asymptotic,kosaraju2018stability} have studied asymptotic stability of \eqref{pdd} using Hybrid system framework wherein the dual system is a switched dynamical system.

%An equivalence between the optimization problem and the variational inequality problem is stated in [Corollary 1.1, \cite{konnov2002theory}. 

This section describes the variational inequality based formulation of the saddle-point problem and proceeds to develop the projected primal-dual dynamics to solve the underlying variational inequality problem.

\subsection{Saddle-point problem as a variational inequality}
The equivalence between variational inequality and constrained optimization problem has been well established in Proposition \ref{optpro} and \cite[Theorem 1.3.1]{facchinei2007finite}]. Using these properties, if $F = \nabla f$ holds, then the equivalence stated in \eqref{r1} proves that the optimal solution $x^*$ of the problem $\mathrm{OPT(f,X)}$ given in \eqref{opt}-\eqref{constr_set} can be obtained by formulating $\mathrm{OPT(f,X)}$ as a variational inequality $\mathrm{VI(\nabla f,X)}$ stated in \eqref{vip}. However, this solution concept holds only with respect to the primal optimizer $x^*$ which is necessary but not sufficient to develop the main results. The similar solution concept must be developed also for the saddle point $z^*$. To this end, this subsection establishes additional properties linking the saddle-point problem $\mathrm{LP}(L,\Omega)$ \eqref{spp} with the equivalent variational inequality problem.

%Let $X_f$ denote the solution set of the convex optimization problem \eqref{cvx}-\eqref{constr_set}, then a well-known optimality condition for the optimization problem under consideration, is given below.
%\begin{theorem}{\cite{konnov2002theory}}\label{knv}
%   Let $X\in \mathbb{R}^n$ and $f:X \rightarrow \mathbb{R}$ be a differentiable function. Then:
%  \begin{enumerate}
%     \item $X_f\subseteq X^*$, i.e., each solution of \eqref{cvx}-\eqref{constr_set} is a solution of variational inequality \eqref{vip}, where 
%     \begin{align}
%         G(x)=\nabla f(x) \label{G}
%    \end{align}
%   \item if $f$ is convex, and $G$ is defined by \eqref{G}, then $X^*\subseteq X_f$. 
%\end{enumerate}
%\end{theorem}

To arrive at the variational inequality problem that is equivalent to the $\mathrm{LP}(L,\Omega)$, the following result from \cite[Proposition 1.3.4]{facchinei2007finite} is useful.
\begin{align}
\mathbf{L}(x,\lambda)=\nabla_xL(x,\lambda) \label{dlL}
\end{align}
where $\mathbf{L}(x,\lambda)$ is defined as the \textit{vector valued} Lagrangian function of the variational inequality $\mathrm{VI(\nabla f,X)}$ stated below:
\begin{align}
\mathbf{L}(x,\lambda)=F(x)+\lambda^T\nabla g(x),~\forall (x,\lambda)\in \Omega.
\end{align}

From \eqref{dlL}, it is obvious that for a given saddle point $z^*$, 
\begin{align}
\mathbf{L}(x^*,\lambda^*)=\nabla_{x^*}L(x^*,\lambda^*) \label{zstar}
\end{align}

Using \eqref{zstar}, an equivalence similar to \eqref{r1} can be obtained between the saddle-point problem $\mathrm{LP}(L,\Omega)$ and the corresponding variational inequality problem of the form $\mathrm{VI(\nabla L,\Omega)}$\footnote[1]{For the sake of notational simplicity, $\nabla_z L$ is denoted as $\nabla L$ in $\mathrm{VI}(\nabla L, \Omega)$ and similar notations}. In line with this, one can define the variational inequality problem $\mathrm{VI(\nabla L,\Omega)}$ by replacing $F$ by $\nabla L$ and $X$ by $\Omega$ in \eqref{vip}, as stated below.
\begin{definition}\label{lgvi}
	For a closed convex set $\Omega \in \mathbb{R}^{n+m}$ and vector function $\nabla L:\Omega\rightarrow \mathbb{R}^{n+m}$, the finite dimensional variational inequality problem, $\mathrm{VI}(\nabla L,\Omega)$, is to determine a vector $z^*\in \Omega$ such that 
	\begin{equation}\small
	(z-z^*)^T\nabla_z L(z^*)\geq 0,~\forall z\in \Omega. \label{mvi}
	\end{equation}
	or equivalently
	\begin{align}\small
	(x-x^*)^T\nabla_x L(x^*,\lambda^*)\geq 0,~\forall x\in X.\\
	(\lambda-\lambda^*)^T(-\nabla_\lambda L(x^*,\lambda))\geq 0,~\forall \lambda\in \Lambda.
	\end{align}
\end{definition}
\begin{remark}
	Denote the boundary and interior of $\Omega$, respectively, by $\partial \Omega$ and $\Omega^0$. If $z \in \partial \Omega$, the set of inward normals to $\Omega$ at $z$ is defined as the dual cone of $T_z\Omega$, as follows:
	
	\begin{align}
	N_z\Omega = \{\eta:\norm{\eta}=1|\langle \eta^T,z-z' \rangle \leq 0, \forall z' \in T_z\Omega\}.
	\end{align} where $T_z\Omega$ is the set of all inward tangent vectors.
	
	From Definition \ref{lgvi}, the necessary and sufficient condition for $z^*$ to be a solution to $\mathrm{VI(\nabla L,\Omega)}$, is that
	\begin{align*}
	-G(z^*) \in N_{z^*}\Omega.
	\end{align*}
\end{remark}
A geometric representation of the variational inequality $\mathrm{VI}(\nabla L,\Omega)$ is given in Fig. \ref{vipro}.
Using Proposition \ref{snle}, the $\mathrm{VI}(\nabla L,\Omega)$ can be proved to be equivalent to solving a system of nonlinear equations, as stated below.
\begin{proposition}\label{snlez}
	Let $\nabla L: \Omega \rightarrow \mathbb{R}^{n+m}$ be a vector function. Then $z^* \in \mathbb{R}^{n+m}$ solves the variational inequality $\mathrm{VI}(\nabla L,\Omega)$ if and only if $z^*$ solves the system of equations
	\begin{align}
	\nabla L(z^*)=\mathbf{0}.\end{align}
	or equivalently (from \eqref{gu})
	\begin{align}
	G(z^*) = \mathbf{0}\label{nlez}.
	\end{align}    
\end{proposition}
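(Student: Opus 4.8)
The plan is to obtain Proposition \ref{snlez} as a direct specialization of Proposition \ref{snle}, reading the abstract statement there with $F \mapsto \nabla L$, $X \mapsto \Omega$, the ambient dimension $n \mapsto n+m$, and $x^* \mapsto z^*$. Concretely, I would first verify that the hypotheses of Proposition \ref{snle} transfer to the present setting: the feasible set $\Omega = X \times \Lambda$ is closed and convex, being the Cartesian product of the closed convex set $X$ (convexity of the $g_i$, Assumption \ref{ass2}) with $\Lambda = \mathbb{R}^m_+$, and the map $\nabla L$ is a genuine continuous vector field $\Omega \to \mathbb{R}^{n+m}$ by the $\mathcal{C}^2$ regularity of $f$ and $g$. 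With these in place, Proposition \ref{snle} yields directly that $z^*$ solves $\mathrm{VI}(\nabla L,\Omega)$ if and only if $\nabla L(z^*) = \mathbf{0}$, and the final identification $\nabla L(z^*) = \mathbf{0} \Leftrightarrow G(z^*) = \mathbf{0}$ is nothing but the definition \eqref{gu} of the gradient map together with the footnote convention $\nabla L \equiv \nabla_z L \equiv G$.

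The two implications split naturally, and I would dispose of the reverse one first since it is immediate: if $G(z^*) = \mathbf{0}$, then for every $z \in \Omega$ we have $(z-z^*)^T \nabla_z L(z^*) = (z-z^*)^T \mathbf{0} = 0 \geq 0$, so $z^*$ satisfies \eqref{mvi} and solves the variational inequality. No regularity or interiority hypothesis is needed in this direction.

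The forward direction is the step I expect to be the main obstacle, and it is where I would concentrate the argument. For an \emph{unconstrained} domain one recovers $G(z^*) = \mathbf{0}$ simply by testing \eqref{mvi} with $z = z^* - \nabla_z L(z^*)$, which forces $-\norm{\nabla_z L(z^*)}^2 \geq 0$ and hence $G(z^*)=\mathbf{0}$. On a genuinely constrained set $\Omega$, however, the sharp characterization of a solution is the normal-cone inclusion $-G(z^*) \in N_{z^*}\Omega$ recorded after Definition \ref{lgvi}, and this collapses to $G(z^*) = \mathbf{0}$ only when the inward-normal cone $N_{z^*}\Omega$ is trivial, i.e. at points $z^*$ lying in the interior $\Omega^0$. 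Thus the crux is to justify that the equation-form equivalence of Proposition \ref{snle} is being invoked in a regime where this normal term vanishes. I would make this explicit, either by restricting attention to interior solutions or by carrying the exact inclusion $-G(z^*) \in N_{z^*}\Omega$ through the downstream arguments in place of the literal equation $G(z^*) = \mathbf{0}$, since it is this inclusion, equivalently the KKT system \eqref{k1}--\eqref{k5}, that actually pins down the saddle point $z^*$.
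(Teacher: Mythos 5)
Your overall route is exactly the paper's: the text offers no argument for Proposition \ref{snlez} beyond the one line ``Using Proposition \ref{snle}, \dots'', i.e.\ precisely the substitution $F \mapsto \nabla L$, $X \mapsto \Omega$, $x^* \mapsto z^*$ that you carry out, and your verification that $\Omega = X \times \Lambda$ is closed and convex and that $\nabla L$ is a continuous map $\Omega \to \mathbb{R}^{n+m}$ is more care than the paper itself supplies. Your treatment of the reverse implication is correct and complete.

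The obstacle you flag in the forward direction is not merely a point to be made explicit --- it is a genuine defect of the statement, and you have located it precisely. The source result behind Proposition \ref{snle} (Nagurney) carries the hypothesis that the feasible set is all of $\mathbb{R}^n$, which the paper's transcription drops; without it the correct characterization is the normal-cone inclusion $-G(z^*) \in N_{z^*}\Omega$, exactly as you say. For the set $\Omega = X \times \mathbb{R}^m_+$ the forward implication actually fails in the generic case: if the $i$-th constraint is inactive at the optimum, $g_i(x^*) < 0$ and $\lambda_i^* = 0$, then the $\lambda_i$-component of $G(z^*)$ equals $-g_i(x^*) > 0$, yet $z^*$ still satisfies \eqref{mvi} because $(\lambda_i - \lambda_i^*)(-g_i(x^*)) \geq 0$ for every $\lambda_i \geq 0$. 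This is just complementary slackness \eqref{k3} in disguise, so $z^*$ solves $\mathrm{VI}(\nabla L, \Omega)$ while $G(z^*) \neq \mathbf{0}$. Your proposed repair --- either restricting to $z^* \in \Omega^0$ or replacing the literal equation $G(z^*) = \mathbf{0}$ by the inclusion $-G(z^*) \in N_{z^*}\Omega$ (equivalently the KKT system \eqref{k1}--\eqref{k5}) in all downstream uses --- is the right one, and the second option is preferable since the saddle point of an inequality-constrained problem typically sits on $\partial\Omega$. In short: same approach as the paper, but you have correctly identified that the proposition as stated needs either an interiority hypothesis or a reformulation, a gap the paper inherits from its imprecise restatement of Proposition \ref{snle}.
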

\begin{figure}[t]
	\centering
	\includegraphics[width=3.0in]{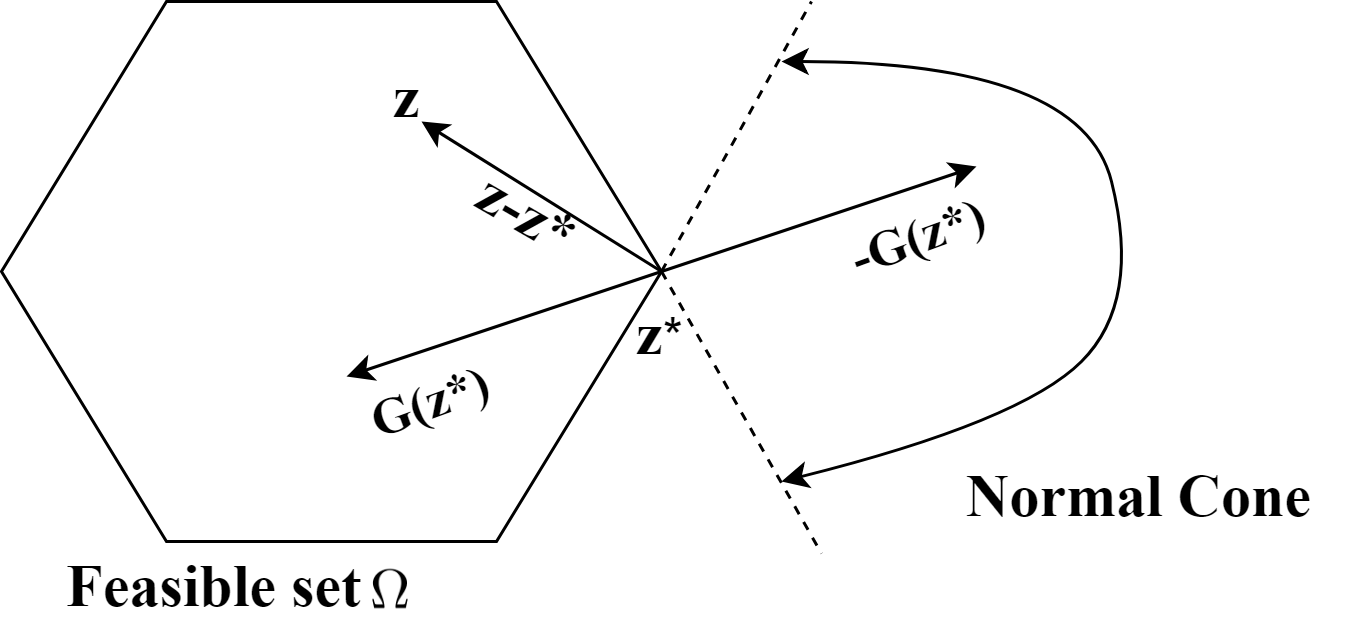}
	\caption{A geometric representation of the variational inequality $\mathrm{VI}(\nabla L,\Omega)$.}
	\label{vipro}
\end{figure}
In a similar way, a fixed point problem corresponding to the $\mathrm{VI}(\nabla L,\Omega)$ can be obtained as given below.
\begin{proposition}\label{lgvifpp}
	$z^*$ is a solution to $\mathrm{VI(\nabla L,\Omega)}$ if and only if for any $\alpha >0$, $z^*$ is a fixed point of the projection map:
	\begin{align}
	z^*=P_\Omega(z^*-\alpha \nabla_z L(z^*)) \label{fppprobz}
	\end{align}
	where \begin{align}
	P_{\Omega}=\arg \min_{v\in \Omega}\norm{z-v}.\label{projectionz}
	\end{align}
\end{proposition}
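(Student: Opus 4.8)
The plan is to recognize this proposition as the specialization of the general fixed-point characterization in Proposition \ref{fpppro} to the case $F = \nabla L$ and $X = \Omega$. Since $\Omega = X \times \Lambda$ is closed and convex and $\mathrm{VI}(\nabla L,\Omega)$ is defined in Definition \ref{lgvi} precisely by substituting $\nabla L$ for $F$ and $\Omega$ for $X$ in \eqref{vip}, invoking Proposition \ref{fpppro} would deliver the result in one line. Nonetheless I would prefer a self-contained argument built on the obtuse-angle characterization of the Euclidean projection recorded in \eqref{bapro}, since it makes the role of the constant $\alpha > 0$ explicit; recall that for closed convex $\Omega$ the projection $P_\Omega(y)$ is the unique point $p \in \Omega$ satisfying $[y - p]^T[p - z] \geq 0$ for all $z \in \Omega$.

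For the forward direction, I would assume $z^*$ solves $\mathrm{VI}(\nabla L,\Omega)$, set $y = z^* - \alpha \nabla L(z^*)$, and show that $z^*$ itself satisfies the inequality \eqref{bapro} that characterizes $P_\Omega(y)$. Writing $y - z^* = -\alpha \nabla L(z^*)$, for every $z \in \Omega$ I would compute $(y - z^*)^T(z^* - z) = \alpha\, \nabla L(z^*)^T(z - z^*) \geq 0$, where the inequality is exactly the VI condition \eqref{mvi} together with $\alpha > 0$. Because $\Omega$ is closed and convex, the minimizer in \eqref{projectionz} is unique and is the only point of $\Omega$ satisfying \eqref{bapro}; hence $P_\Omega(y) = z^*$, which is the fixed-point relation \eqref{fppprobz}.

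For the converse, I would start from $z^* = P_\Omega(z^* - \alpha \nabla L(z^*))$ and apply \eqref{bapro} with $x = z^* - \alpha \nabla L(z^*)$, so that $P_\Omega(x) = z^*$ and $x - P_\Omega(x) = -\alpha \nabla L(z^*)$. Then \eqref{bapro} reads $-\alpha\, \nabla L(z^*)^T(z^* - z) \geq 0$ for all $z \in \Omega$. Multiplying through by $-\alpha^{-1} < 0$, which reverses the inequality, gives $\nabla L(z^*)^T(z - z^*) \geq 0$ for all $z \in \Omega$, which is precisely \eqref{mvi}, so $z^*$ solves $\mathrm{VI}(\nabla L,\Omega)$.

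There is no genuine obstacle here, as the statement is essentially a restatement of Proposition \ref{fpppro}. The only points demanding care are the direction of the inequality when multiplying through by the negative factor $-\alpha^{-1}$, and the appeal to uniqueness of the projection — valid because $\Omega$ is closed and convex — which is what lets me conclude in the forward direction that the point verifying \eqref{bapro} must coincide with $P_\Omega(y)$.
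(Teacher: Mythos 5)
Your proposal is correct and matches the paper's route: the paper offers no separate argument for this proposition, simply obtaining it as the specialization of Proposition \ref{fpppro} to $F=\nabla L$ and $X=\Omega$ (``In a similar way, a fixed point problem corresponding to the $\mathrm{VI}(\nabla L,\Omega)$ can be obtained''), which is exactly your first paragraph. Your additional self-contained argument via the projection characterization \eqref{bapro} is also correct in both directions (including the sign handling when passing from $-\alpha\,\nabla L(z^*)^T(z^*-z)\geq 0$ to the VI condition) and merely fills in the details the paper leaves to the cited reference.
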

%    $P_{\Omega}$ defined in \eqref{projectionz} is a minimum norm projection operator  \cite{friesz1994day}, as defined below:
%    \begin{align}
%    P_{\Omega} ({z-\alpha G(z)})&=\begin{cases}
%    z-\alpha G(z), ~\mathrm{if}~(z-\alpha G(z))>0\\
%    0,~\mathrm{otherwise}.
%    \end{cases}\label{project}
%    \end{align}
%    \eqref{project} ensures that the trajectories of \eqref{mpdd} remain strictly feasible, i.e. $z\in \Omega$ remain strictly feasible for all $t\geq 0$.
%    
Denote the fixed point problem in Proposition \ref{lgvifpp} as $\mathrm{FPP}(\nabla L,\Omega)$, it follows from the equivalence stated in \eqref{r2} that
\begin{align}
\mathrm{FPP}(\nabla L,\Omega) \implies \mathrm{VI}(\nabla L,\Omega).\label{same route}
\end{align}
Further one can use \cite[Corollary 1.1]{konnov2002theory} to show that the problems \eqref{spp}, \eqref{mvi}, and \eqref{nlez} are equivalent such that the following holds:
\begin{align}\small
\mathrm{FPP}(\nabla L,\Omega) \implies \mathrm{VI}(\nabla L,\Omega) \implies \mathrm{LP}(L,\Omega) .\label{same routelp}
\end{align}
The equivalence result \eqref{same routelp} confirms that the solution $z^*$ of the $\mathrm{VI}(\nabla L,\Omega)$ is the saddle point $z^*$ of saddle-point problem $\mathrm{LP}(L,\Omega)$.
\subsection{Projected primal-dual dynamics}
The framework of finite dimensional variational inequalities studies only the equilibrium solutions, which in a way uses \eqref{fppprobz} to arrive at a static representation of the system \eqref{gu} at its steady state. The dynamic representation of the system \eqref{gu} which shall follow the equivalence stated in \eqref{same route}, must be developed to understand how the variable of interest, i.e., $z$ converges to the solution $z^*$ of the $\mathrm{VI}(\nabla L,\Omega)$. A dynamical model that represents the $\mathrm{VI}(\nabla L,\Omega)$ is widely known as \quotes{the projected dynamical system} (PDS). 
\begin{align}
\dot{z}=\beta\{P_{\Omega}[z-\alpha G(z)]-z\},\label{mpdd}
\end{align}
where $\beta>0$. 
By invoking Remark \ref{remark3} from preliminaries, the $\mathrm{VI}(\nabla L,\Omega)$ can be expressed as a $\mathrm{PDS}(\nabla L,\Omega)$ and the following equivalence can be derived.
\begin{align}\small
\mathrm{PDS}(\nabla L,\Omega) \ScaledImplies \mathrm{FPP}(\nabla L,\Omega) \ScaledImplies \mathrm{VI}(\nabla L,\Omega) \ScaledImplies
\mathrm{LP}(L,\Omega). 
\label{same routelpds}
\end{align}

Towards this end, the $\mathrm{PDS}(\nabla L,\Omega)$ is regarded as the projected PD dynamics whose solution converges to the saddle point solution of the saddle-point problem $\mathrm{LP}(L,\Omega)$. Since $\Omega$ is a closed and convex set, using Theorem \ref{thm2.1nagu} and Theorem \ref{uniqueo} from preliminaries, the existence and the uniqueness of $z^*$ are guaranteed. With this, the next section proceeds towards the stability analysis of the projected PD dynamics.

\subsection{Stability analysis}
In what follows, monotonicity property of $G(z)$ is explored and it is proved that the projected PD dynamics \eqref{mpdd} is Lyapunov stable. 
\begin{lemma}\label{thm1}
	If Assumptions \eqref{ass1}-\eqref{ass3} hold, then $G(z)$ is monotone such that 
	\begin{align}
	[G(z_1)-G(z_2)]^T(z_1-z_2)\geq 0 \label{ineq1}
	\end{align}
	for every pair of $z_1,z_2\in \Omega$.
\end{lemma}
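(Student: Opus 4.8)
The plan is to compute the monotonicity inner product \eqref{ineq1} directly and to split it into a \emph{primal} part governed by Assumption~\ref{ass1} and a \emph{coupling} part governed by convexity of the constraints together with nonnegativity of the multipliers. Writing $z_1=(x_1,\lambda_1)$ and $z_2=(x_2,\lambda_2)$, and reading off $G(z)=\big(\nabla f(x)+\sum_{i=1}^m \lambda_i\nabla g_i(x),\,-g(x)\big)$ from \eqref{lg} and \eqref{gu}, I would expand
$$[G(z_1)-G(z_2)]^T(z_1-z_2) = [\nabla f(x_1)-\nabla f(x_2)]^T(x_1-x_2) + C,$$
where $C$ collects every term involving the constraints. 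The first term is bounded below by $\mu\norm{x_1-x_2}^2\ge 0$ by Assumption~\ref{ass1}, so the whole argument reduces to showing $C\ge 0$.

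The main work is to verify nonnegativity of the coupling term $C$. After substituting $-g(x)$ for the dual block and regrouping, $C$ should collapse into a single sum over the constraints,
$$C=\sum_{i=1}^m \lambda_{1,i}\big[\nabla g_i(x_1)^T(x_1-x_2)-(g_i(x_1)-g_i(x_2))\big] + \sum_{i=1}^m \lambda_{2,i}\big[(g_i(x_1)-g_i(x_2))-\nabla g_i(x_2)^T(x_1-x_2)\big].$$
I would then invoke convexity of each $g_i$ (Assumption~\ref{ass2}), which yields the two gradient inequalities $g_i(x_1)-g_i(x_2)\ge \nabla g_i(x_2)^T(x_1-x_2)$ and $g_i(x_1)-g_i(x_2)\le \nabla g_i(x_1)^T(x_1-x_2)$. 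These make each bracketed quantity nonnegative, and since the multipliers satisfy $\lambda_{1,i},\lambda_{2,i}\ge 0$ (as $\lambda\in\Lambda\subseteq\mathbb{R}^m_+$), every summand is a product of nonnegative factors. Hence $C\ge 0$ and \eqref{ineq1} follows.

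The delicate step is the algebraic regrouping of the coupling terms into the displayed sum: one has to see that the cross terms $\lambda^T\nabla g(x)(x_1-x_2)$ arising from the primal block and the terms $[g(x_1)-g(x_2)]^T(\lambda_1-\lambda_2)$ arising from the dual block reorganize so that each $g_i$ is paired with \emph{its own} multiplier and compared against the correct linearization point. This is precisely where the sign asymmetry of $G$ (descent in $x$, ascent in $\lambda$) is exploited; the same asymmetry explains why no positive term in $\norm{\lambda_1-\lambda_2}^2$ survives, so the estimate gives only monotonicity rather than strong monotonicity, consistent with the paper's subsequent discussion. I would not expect to need Assumption~\ref{ass3} for this lemma; it serves elsewhere to guarantee strict feasibility, strong duality, and existence of the saddle point rather than the monotonicity of $G$.
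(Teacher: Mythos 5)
Your proof is correct, but it takes a genuinely different route from the paper. The paper argues at second order: it computes the Jacobian $\nabla G$ in the variable $z=(x,\lambda)$, observes that the off-diagonal blocks $\nabla g(x)^T$ and $-\nabla g(x)$ are skew-adjoint and therefore cancel in the symmetric part, leaving only the block $\nabla^2 f(x)+\lambda^T\nabla^2 g(x)\succeq 0$ (positive semidefinite because $\nabla^2 f\succeq\mu I$ by Assumption~\ref{ass1} and $\lambda_i\geq 0$ with $\nabla^2 g_i\succeq 0$ by Assumption~\ref{ass2}), and then invokes Proposition~\ref{monoto}. Your argument is first order: the direct expansion and your regrouping of the coupling term $C$ are algebraically exact, the two gradient inequalities you cite are the correct instances of convexity of $g_i$, and the nonnegativity of the multipliers closes the estimate. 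What your approach buys is that it needs only differentiability and convexity of $f$ and $g$, not the $\mathcal{C}^2$ smoothness the Jacobian computation requires, and it is self-contained rather than resting on the Jacobian-to-monotonicity proposition; what the paper's approach buys is brevity and a transparent structural explanation of the cancellation (the primal--dual coupling is skew-symmetric, so it contributes nothing to the symmetric part), which is the same phenomenon you describe when you note that no $\norm{\lambda_1-\lambda_2}^2$ term survives. Your observation that Assumption~\ref{ass3} is not needed for this lemma is also consistent with the paper, whose own proof uses only Assumptions~\ref{ass1} and~\ref{ass2}.
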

\begin{proof}
	The Jacobian matrix of $G$ is derived as follows:
	\begin{align}\small     
	\nabla G = \begin{bmatrix}
	\nabla^2 f(x) + \lambda^T\nabla^2 g(x) & \nabla g(x)^T\\
	-\nabla g(x)    & \mathbf{0}
	\end{bmatrix}
	\end{align}
	Recall from Proposition \ref{monoto}, $G$ is monotone if and only if the $\nabla G$ is positive semidefinite. By $\nabla G$ positive semidefinite, it is meant that
	\begin{align}
	\frac{1}{2}\nabla G+\frac{1}{2} \nabla G^T \geq 0,\forall z\in \Omega,\forall t.\label{psd}
	\end{align}
	
	Inequality \eqref{psd} can be easily verified by checking the symmetric part of $\nabla G$:
	\begin{align}
	\frac{\nabla G + \nabla G^T}{2} &= \begin{bmatrix}
	\nabla^2 f(x) + \lambda^T\nabla^2 g(x) & \mathbf{0}_{n \times m}\\
	\mathbf{0}_{m \times n}    & \mathbf{0}_{m \times m} 
	\end{bmatrix}\\
	&\geq \mathbf{0}_{(n+m) \times (n+m)},\forall z\in \Omega,\forall t\label{psd1}
	\end{align}
	where $\mathbf{0}$ is a zero matrix of appropriate dimensions. From \eqref{psd1} it is verified that the $\nabla G$ is a $(n+m) \times (n+m)$ matrix which is rank deficient by $m$ rows. Thus it is only a positive semi-definite matrix. Thus from Proposition \ref{monoto}, it can be concluded that $G$ is a monotone.
\end{proof}

\begin{lemma}\label{thm3.5.1}
	Let ${G}(z)$ be continuously differentiable on open convex subset of $\mathbb{R}^{n+m}$. If Assumptions \ref{ass1}-\ref{ass3} hold and $G$ is monotone for all $z\in \Omega$, then with $\alpha>0$, the projected PD dynamics \eqref{mpdd} is Lyapunov stable.
\end{lemma}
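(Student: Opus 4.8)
The plan is to certify stability of the unique equilibrium $z^*$ by a Lyapunov argument for the globally projected dynamical system, treating \eqref{mpdd} as the instance $\mathrm{PDS}(\nabla L,\Omega)$ of \eqref{frz} with $F=G$ and $X=\Omega$. First I would record the structural facts already in hand: since $\Omega$ is closed and convex, Theorem \ref{thm2.1nagu} together with uniqueness of the saddle point (guaranteed by strong duality and the KKT conditions) shows that $z^*$ exists and is unique, and by the equivalence \eqref{same routelpds} it is the saddle point; by Lemma \ref{lemma1.3} every trajectory with $z(0)\in\Omega$ satisfies $z(t)\in\Omega$, so I may restrict attention to $z\in\Omega$. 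Crucially, Lemma \ref{thm1} supplies the only analytic hypothesis needed downstream, namely that $G$ is monotone on $\Omega$.

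I would take the quadratic candidate $V(z)=\tfrac{1}{2}\norm{z-z^*}^2$, which is positive definite and radially unbounded about $z^*$, and differentiate along \eqref{mpdd} to get $\dot V=\beta(z-z^*)^T\{P_\Omega[z-\alpha G(z)]-z\}$. Writing $z^{+}:=P_\Omega[z-\alpha G(z)]$, the central estimate comes from the projection property \eqref{bapro} applied with the point $z-\alpha G(z)$ and the feasible comparison vector $z^*$, which yields $(z-\alpha G(z)-z^{+})^T(z^{+}-z^*)\ge 0$. Expanding $z^{+}-z^*=(z^{+}-z)+(z-z^*)$ rewrites $\dot V$ in terms of the projection residual $z-z^{+}$ and the bilinear term $-\alpha\beta(z-z^*)^TG(z)$.

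The decisive monotonicity input is that $(z-z^*)^TG(z)\ge 0$ for all $z\in\Omega$: indeed, since $z^*$ solves $\mathrm{VI}(\nabla L,\Omega)$ we have $(z-z^*)^TG(z^*)\ge 0$ from \eqref{mvi}, and Lemma \ref{thm1} gives $(z-z^*)^T\big(G(z)-G(z^*)\big)\ge 0$; adding these proves the claim. Feeding this into the expansion leaves $\dot V$ bounded by a sum of the residual contributions and a manifestly nonpositive term, and I would conclude $\dot V\le 0$ on $\Omega$, whence $z^*$ is stable in the sense of Lyapunov.

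The main obstacle is the projection residual: unlike the unconstrained flow $\dot z=-G(z)$, for which $\dot V=-(z-z^*)^TG(z)\le 0$ is immediate, the global projection introduces the extra term $\alpha G(z)^T(z-z^{+})$, which has no sign a priori and must be dominated before $\dot V\le 0$ can be asserted. I would handle it exactly as in the stability analysis of globally projected dynamical systems in \cite{gao2003exponential,xia2000stability}---if necessary augmenting $V$ with the residual terms $\alpha G(z)^T(z-z^{+})-\tfrac{1}{2}\norm{z-z^{+}}^2$, whose nonnegativity follows from a second application of \eqref{bapro} with comparison vector $z$---so that the residual is absorbed. The qualitative point I would stress at the end is that Lemma \ref{thm1} only delivers a positive \emph{semi}definite symmetric part of $\nabla G$ (its dual block vanishes), so the derivative is at best negative semidefinite; this yields Lyapunov stability but blocks any strict decrease, and it is precisely this semidefiniteness that the later choice of Riemannian metric is engineered to upgrade to strong monotonicity, and hence to exponential stability.
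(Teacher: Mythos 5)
Your setup is sound and several ingredients are exactly right: restricting to $z\in\Omega$ via Lemma \ref{lemma1.3}, the identity $\dot V=\beta(z-z^*)^T\{P_\Omega[z-\alpha G(z)]-z\}$, the use of \eqref{bapro} with the point $z-\alpha G(z)$ and comparison vector $z^*$, and the derivation $(z-z^*)^TG(z)\ge 0$ from \eqref{mvi} plus Lemma \ref{thm1}. But there is a genuine gap at the decisive step. With the plain quadratic $V(z)=\tfrac12\norm{z-z^*}^2$, the expansion you describe gives
\begin{align*}
\dot V \le -\beta\Bigl[\norm{z-z^{+}}^2-\alpha G(z)^T(z-z^{+})+\alpha(z-z^*)^TG(z)\Bigr],
\end{align*}
and the second application of \eqref{bapro} (with comparison vector $z$) shows $\alpha G(z)^T(z-z^{+})\ge\norm{z-z^{+}}^2$, i.e.\ the first two bracketed terms sum to something \emph{nonpositive}. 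So the bracket has no definite sign and $\dot V\le 0$ cannot be concluded; your own statement that the expansion ``leaves $\dot V$ bounded by \dots a manifestly nonpositive term'' and your subsequent admission that the residual ``has no sign a priori'' are in tension. The repair you propose --- augmenting $V$ by the regularized gap term $\alpha G(z)^T(z-z^{+})-\tfrac12\norm{z-z^{+}}^2$ --- is a known device, but you never differentiate it along the flow; doing so brings in $\nabla G(z)\dot z$ and the directional derivative of the projection $z\mapsto z^{+}$, and establishing nonpositivity of the resulting expression under mere monotonicity is precisely the nontrivial content of the results in \cite{gao2003exponential,xia2000stability} that you defer to. As written, the crux of the lemma is asserted, not proved.

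The paper avoids this entirely by a different augmentation: it takes
\begin{align*}
V(z)=\bigl(L(x^*,\lambda^*)-L(x^*,\lambda)\bigr)+\bigl(L(x,\lambda^*)-L(x^*,\lambda^*)\bigr)+\tfrac12\norm{z-z^*}^2,
\end{align*}
whose gradient is (up to the paper's identification of the cross-evaluated gradients with $G(z)$) equal to $G(z)+z-z^*$. Then $\dot V=-[G(z)+z-z^*]^T(z-\tilde z)$ pairs exactly with the single projection inequality $[z-z^*+\alpha G(z)]^T(z-\tilde z)\ge\norm{z-\tilde z}^2+\alpha(z-z^*)^TG(z)$, giving $\dot V\le-\norm{z-\tilde z}^2-(z-z^*)^T\bigl(G(z)-G(z^*)\bigr)\le 0$ directly from monotonicity --- no differentiation of the projection residual and no second use of \eqref{bapro} is needed. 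Your closing observation, that the semidefiniteness of the symmetric part of $\nabla G$ is exactly what the Riemannian metric later upgrades, is correct and worth keeping.
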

\begin{proof}
	For the Lagrangian function \eqref{lg}, the following inequalities always hold:
	\begin{align*}
	{L}(x^*,\lambda^*)-{L}(x^*,\lambda)\geq 0\\
	{L}(x,\lambda^*)-{L}(x^*,\lambda^*)\geq 0
	\end{align*}
	Let us define the Lyapunov function as follows:
	\begin{align}
	V(z) &= ({L}(x^*,\lambda^*)-{L}(x^*,\lambda))+({L}(x,\lambda^*)-{L}(x^*,\lambda^*))\nonumber\\
	&~~~+\frac{1}{2}\norm{z-z^*}^2.\label{Vz}
	\end{align}
	The last term in \eqref{Vz} ensures that $V(z) \geq \frac{1}{2}\norm{z-z^*}^2,\forall z \in \Omega$, thus also ensures the boundedness of the level sets of $V(z)$.
	
	Differentiating $V(z)$ with respect to time $t$ yields:
	\begin{align}
	\dot{V}(z)&=\nabla V(z)\dot{z}\nonumber\\
	&=-[(\nabla {L}(x,\lambda^*)-\nabla {L}(x^*,\lambda))+z-z^*]^T(z-\tilde{z})\nonumber\\
	&=-[G(z)+z-z^*]^T(z-\tilde{z}) \label{ff}
	\end{align}
	Substituting $x = z-\alpha {G}(z)$ and $y=z^*$ in \eqref{bapro}\cite{gao2003exponential}, yields
	\begin{align}
	[z-z^*+\alpha G(z)]^T(z-\tilde{z})\geq \norm{z-\tilde{z}}^2+\alpha (z-z^*)^T{G}(z). \label{use1}
	\end{align}
	Using \eqref{use1} in \eqref{ff} yields,
	\begin{align}
	\dot{V}(z)&\leq -(z-z^*)^T{G}(z)-\norm{z-\tilde{z}}^2\nonumber\\
	&\leq -(z-z^*)^T(G(z)-G(z^*))-\norm{z-\tilde{z}}^2
	\end{align}
	From \eqref{ineq1}, 
	\begin{align}
	\dot{V}(z)\leq 0 \label{leq1}
	\end{align}
	\eqref{ineq1} and \eqref{leq1} ensure that the Lyapunov function \eqref{Vz} is non-increasing along \eqref{mpddr}, hence the projected dynamical system is Lyapunov stable. 
\end{proof}
\begin{remark}
	The Lyapunov stability result from Lemma \ref{thm3.5.1} also holds for the optimization problem of the form \eqref{opt} with linear inequality constraints as defined in the set $X =\{x\in \mathbb{R}^n|Ax\leq b\}$.
\end{remark}
From Proposition \ref{monoto} and Lemma \ref{lemma1.3}, it is observed that the projected PD dynamics \eqref{mpdd} does not achieve exponential (asymptotic) stability since the gradient $G$ is not strongly (strictly) monotone on the Euclidean space. Thus to obtain desired stability results, a geometry other than the Euclidean geometry must be considered. The key to achieving a strongly monotone gradient and a globally exponentially stable PD dynamics is to formulate the problem on a Riemannian manifold as discussed in the subsection below.

\section{Projected primal-dual dynamics over a Riemannian manifold} Proposition \ref{prop1.5} and \ref{prop1.6} establish the relation between monotonicity and convexity on Riemannian manifolds. While the gradient of the Lagrangian function is just monotone on the Euclidean space, using rich metric properties of the Riemannian manifolds, it can be modified to become strongly monotone. On a Riemannian manifold, the key to obtaining a strongly monotone gradient function is to define a Riemannian metric that yields one. Following this reasoning, in this subsection, a linear inequality constrained convex optimization problem is considered over a Riemannian manifold and the Riemannian metric is chosen such that the gradient of the Lagrangian function is strongly monotone. Using the relation between strong monotonicity and uniqueness of the solution, under the assumption of Lipschitz continuity, the projected PD dynamics is shown to have globally exponentially stable saddle-point solution. 

Considers the following optimization problem 
\begin{align}
\min ~&f(x)\nonumber\\
\mathrm{subject~to}~&x \in X \label{lincvx}
\end{align} 
where $X =\{x\in \mathbb{R}^n|Ax\leq b\}$, and $A \in \mathbb{R}^{m \times n}$.
\begin{assumption}\label{as4}
	Let matrix $A$ have full row rank $m\leq n$ and $q_1I\leq AA^T \leq q_2I$, where $I$ is an identity matrix and $q_1,q_2$ are positive constants.
\end{assumption}
%\begin{remark}
%    Assumption \ref{ass1} and Assumption \ref{as4} 
%\end{remark}
The Lagrangian function for the problem \eqref{lincvx} is given by
\begin{align}
L(z)=f(x)+\lambda^T(Ax-b)\label{linlag}
\end{align}where $z= (x,\lambda)$, $z\in \Omega=X \times \Lambda$, $x\in X$, $\lambda\in \Lambda \subseteq \mathbb{R}^m_{\geq 0}$.

The gradient of the Lagrangian \eqref{linlag} is obtained as:
\begin{align}
G(z)&=\nabla_z L\\
&=\begin{bmatrix}
\nabla f(x)+A^T\lambda\\
-(Ax-b)
\end{bmatrix}.\label{gud}
\end{align}

\subsection{Strongly monotone gradient of the Lagrangian}
Consider a smooth manifold $\mathcal{M}\subseteq\mathbb{R}^{m+n}$ and let $\mathcal{M}$ be endowed with Riemannian metric $r$. Define the Lagrangian function $L:\mathcal{M}\rightarrow \mathbb{R}$ then the gradient of $L$ at $z\in \mathcal{M}$ is the unique tangent vector $\mathrm{grad} L$ given as
\begin{align}
\langle \mathrm{grad} L,v\rangle_r=D_zL(v),\forall v\in T_z\mathcal{M}. \label{lgre}
\end{align}
In the matrix notation, \eqref{lgre} implies the following
\begin{align*}
\mathrm{grad}_r L = R^{-1}\nabla L^T \label{gradL}
\end{align*}
where $\nabla L = G(z)$ is the gradient vector of $L$ on Euclidean space $\mathbb{R}^{m+n}$. 

Denote $G_r(z)=\mathrm{grad}_rL$, the differential of $G_r(z)\in \mathcal{S}(\mathcal{M})$ is a linear operator $\mathbb{H}_{G_r}:\mathcal{S}(\mathcal{M})\rightarrow\mathcal{S}(\mathcal{M})$, given by $\mathbb{H}_{G_r}(Y):=\nabla_Y(G_r)$. The linear map $\mathbb{H}_{G_r(z)}:T_z\mathcal{M}\rightarrow\mathcal{M}$ assigned to each point $z\in \mathcal{M}$ is defined by the Hessian of $L$, denoted by $\mathbb{H}_{G_r}(z)v=\nabla_v G_r(z)=R^{-1}\nabla G(z),\forall v\in T_z\mathcal{M}$.

The projection operator $P^r_\mathcal{M}:\mathbb{R}^{n+m}\rightarrow \mathcal{M}$ defined as
\begin{align}
P^r_\mathcal{M}(z)=\arg\min_{v\in T_z\mathcal{M}}\norm{z-v}^2_r.
\end{align}
Correspondingly, the projected PD dynamics on $\mathcal{M}$ is defined as follows:
\begin{align}
\dot{z}=\beta\{P^r_\mathcal{M}[z-\alpha G_r(z)]-z\}.\label{mpddr}
\end{align}
Let the Riemannian metric $R$ be chosen as given below:
\begin{align}
R = \begin{bmatrix}
kI & A^T\\
A & kI
\end{bmatrix}^{-1},\label{P}
\end{align}
where 
\begin{align}
k \geq \sqrt{q_2} \label{kcond0}
\end{align} meets the positive definiteness of the matrix\footnote[2]{A similar matrix representation can also be found in \cite{nguyen2018contraction,qu2019exponential}.} $R$. 
The gradient vector $G_r(z) \in T_z\mathcal{M}$ is given by
\begin{align}
G_r(z) &= R^{-1}G(z),\nonumber\\
&=\begin{bmatrix}
k\nabla f(x)-A^TAx+kA^T\lambda+A^Tb\\
A\nabla f(x)-kAx+AA^T\lambda+kb
\end{bmatrix}\label{newgrad}
\end{align} 

In the following section it is proved that the gradient map \eqref{newgrad} is strongly monotone.
\begin{proposition}\label{thm3.4}
	Consider the problem \eqref{lincvx} and let $(\mathcal{M},r)$ be a $n+m$-dimensional smooth manifold. If Assumption \ref{ass1} and \ref{as4} hold for the problem \eqref{lincvx}, then with the linear map $R^{-1}:T_z\mathcal{M}\rightarrow T_z\mathcal{M}$, the gradient vector $G_r(z)$ is strongly monotone.
\end{proposition}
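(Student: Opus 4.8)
The plan is to prove that the transformed map $G_r=R^{-1}G$ is strongly monotone in the ordinary (Euclidean) sense of Definition \ref{def1.3}, since that is precisely the hypothesis the projected-dynamics exponential-stability result \cite{gao2003exponential} requires. I would deliberately \emph{not} route through the Riemannian-Hessian criterion of Proposition \ref{prop1.5}: for this constant metric the quantity $\langle\mathbb{H}_{G_r}(z)v,v\rangle_r$ collapses to $v^T\nabla G\,v=v_1^T\nabla^2f\,v_1$ (writing $v=(v_1,v_2)$), which vanishes on the dual block and is therefore only positive semidefinite. Instead I would estimate the monotonicity inner product of $G_r$ directly. By Proposition \ref{monoto}(3) this is equivalent to uniform positive definiteness of the symmetric part of $\nabla G_r=R^{-1}\nabla G$, but the product form in \eqref{newgrad} makes a direct expansion cleaner.

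Concretely, fix $z_1=(x_1,\lambda_1)$ and $z_2=(x_2,\lambda_2)$, set $\Delta x=x_1-x_2$, $\Delta\lambda=\lambda_1-\lambda_2$, $\Delta g=\nabla f(x_1)-\nabla f(x_2)$, and use $G_r$ as given in \eqref{newgrad}. Expanding $(z_1-z_2)^T(G_r(z_1)-G_r(z_2))$, the decisive bookkeeping step is that the two $k$-weighted coupling terms, $k\,\Delta x^TA^T\Delta\lambda$ from the primal block and $-k\,\Delta\lambda^TA\Delta x$ from the dual block, are equal as scalars (one is the transpose of the other) and hence cancel. What survives is
\[ (z_1-z_2)^T(G_r(z_1)-G_r(z_2)) = k\,\Delta x^T\Delta g-\norm{A\Delta x}^2+\Delta\lambda^TA\Delta g+\norm{A^T\Delta\lambda}^2. \]

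I would then bound the surviving pieces separately. Assumption \ref{ass1} gives $k\,\Delta x^T\Delta g\ge k\mu\norm{\Delta x}^2$; Assumption \ref{as4} gives $\norm{A\Delta x}^2\le q_2\norm{\Delta x}^2$ and $\norm{A^T\Delta\lambda}^2\ge q_1\norm{\Delta\lambda}^2$. The indefinite term $\Delta\lambda^TA\Delta g$ I would handle by Cauchy--Schwarz and Young, $|\Delta\lambda^TA\Delta g|\le\sqrt{q_2}\,\norm{\Delta\lambda}\,\norm{\Delta g}$, which becomes useful only once $\norm{\Delta g}$ is tied back to $\norm{\Delta x}$ through Lipschitz continuity of $\nabla f$, say $\norm{\Delta g}\le\ell\norm{\Delta x}$. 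The whole quantity is then bounded below by the $2\times2$ form $(k\mu-q_2)\norm{\Delta x}^2-\ell\sqrt{q_2}\,\norm{\Delta x}\,\norm{\Delta\lambda}+q_1\norm{\Delta\lambda}^2$ in the scalars $\norm{\Delta x}$ and $\norm{\Delta\lambda}$, which is positive definite — and hence yields a strong-monotonicity modulus $\nu>0$ — exactly when $k\mu>q_2$ and the discriminant condition $4q_1(k\mu-q_2)>\ell^2q_2$ both hold.

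The main obstacle, and the point I would flag explicitly, is precisely this cross term $\Delta\lambda^TA\Delta g$. Under Assumption \ref{ass1} alone $\norm{\Delta g}$ is not controlled by $\norm{\Delta x}$, and a strongly monotone but non-Lipschitz $\nabla f$ drives this term to $-\infty$ relative to $\norm{z_1-z_2}^2$, so the conclusion genuinely requires Lipschitz continuity of $\nabla f$ in addition to the stated hypotheses. Relatedly, the bound $k\ge\sqrt{q_2}$ in \eqref{kcond0} only certifies $R\succ0$; to absorb both the $-\norm{A\Delta x}^2$ deficit and the cross term one must take $k$ strictly larger, of order $k>\tfrac{q_2}{\mu}\bigl(1+\tfrac{\ell^2}{4q_1}\bigr)$. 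I would therefore make the Lipschitz hypothesis explicit and record this sharper lower bound on $k$ as part of the estimate.
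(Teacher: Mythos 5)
Your argument is correct, and it reaches the conclusion by a genuinely different route than the paper. The paper works at the level of the Jacobian: it computes $\nabla G_r=R^{-1}\nabla G$, observes that the $k$-weighted coupling blocks cancel in the Euclidean symmetrization $\tfrac{1}{2}(\nabla G_r+\nabla G_r^T)$ (the same cancellation you exploit at the level of inner products), and then certifies positive definiteness of that symmetric part via a Schur complement of the $AA^T$ block, arriving at the threshold $k>\max\{\sqrt{q_2},\lambda_{\max}(A^TAH^{-1}+0.5q_1H^{-1}+0.5H)\}$ and a modulus $\nu\geq q_1/2$. You instead integrate: you expand $(z_1-z_2)^T(G_r(z_1)-G_r(z_2))$ directly, cancel the coupling terms as scalars, and reduce everything to a $2\times 2$ quadratic form in $\norm{\Delta x},\norm{\Delta\lambda}$, which yields the explicit and checkable conditions $k\mu>q_2$ and $4q_1(k\mu-q_2)>\ell^2 q_2$. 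Your route buys two things the paper's does not. First, it avoids the paper's nonrigorous manipulation of \eqref{pqrs}--\eqref{mkcond1} (post-multiplying a matrix inequality by $(2H)^{-1}$ and taking $\lambda_{\max}$ of the resulting nonsymmetric product), and it states in the Euclidean form that \cite{gao2003exponential} actually requires; your observation that the literal Riemannian-Hessian criterion of Proposition \ref{prop1.5} collapses to $v^T\nabla G\,v=v_1^T\nabla^2 f\,v_1$ for this constant metric is accurate --- the paper's proof in fact certifies Euclidean strong monotonicity of $R^{-1}G$ while presenting it in the notation of \eqref{ffinal}. Second, you make explicit the hypothesis the paper uses silently: a uniform bound $\norm{\nabla f(x_1)-\nabla f(x_2)}\leq\ell\norm{x_1-x_2}$ is indispensable (without it the cross term $\Delta\lambda^TA\Delta g$ defeats any fixed $k$), and the paper needs the same thing for $\lambda_{\max}(\cdot+0.5H)$ in \eqref{choosek} to be finite uniformly in $x$. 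The price of your approach is a somewhat more conservative threshold on $k$ than the Schur-complement bound, but the sharper bookkeeping of hypotheses is worth recording.
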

\begin{proof}
	Recall from Proposition \ref{prop1.5} that for $G_r(z)$ to be strongly monotone, $\nabla_z {G}_r$ must be  positive definite, i.e., for the symmetric part of $\nabla_z {G}_r$, i.e. $\frac{1}{2}\nabla {G}_r+\frac{1}{2}\nabla {G}^T_r$, the following must hold:
	\begin{align}\small 
	\nabla {G}_r+\nabla {G}^T_r&=R^{-1}\nabla G+\nabla G^TR^{-1},\nonumber\\
	&\geq \nu \mathrm{I},\forall z \in \mathcal{M},\forall t\label{gradg}
	\end{align} where $\nu>0$ is a constant, $\mathrm{I}$ is an identity matrix of appropriate dimensions.
	
	The Jacobian of $G_r(z)$, denoted by $\nabla G_r(z)$ is given below:
	\begin{align}\small     
	\nabla G_r(z) = \begin{bmatrix}
	k\nabla^2 f(x) - A^TA & kA^T\\
	A\nabla^2f(x)-kA    & AA^T
	\end{bmatrix}
	\end{align}
	The symmetric part of $\nabla G_r(z)$ is obtained as:
	\begin{align}\small     
	\frac{\nabla {G}_r(z)+\nabla {G}^T_r(z)}{2} = \begin{bmatrix}
	k\nabla^2 f(x) - A^TA & \frac{1}{2}(A\nabla^2f(x))^T\\
	\frac{1}{2}A\nabla^2f(x)    & AA^T
	\end{bmatrix}\label{nablaGr}
	\end{align}
	Let $\mathrm{M}=\nabla {G}_r(z)+\nabla {G}^T_r(z)-q_1\mathrm{I}>0$. Then
	\begin{align}
	\mathrm{M} &= \begin{bmatrix}
	2k\nabla^2 f(x) - 2A^TA -q_1\mathrm{I} & (A\nabla^2f(x))^T\\
	A\nabla^2f(x)    & 2AA^T-q_1\mathrm{I}
	\end{bmatrix}\nonumber\\
	&\geq \begin{bmatrix}
	2k\nabla^2 f(x) - 2A^TA -q_1\mathrm{I} & (A\nabla^2f(x))^T\\
	A\nabla^2f(x)    & AA^T
	\end{bmatrix}\label{Ma}.
	\end{align} 
	Further let $\mathrm{S}=AA^T$, then the Schur compliment of the block $\mathrm{S}$ of the matrix $\mathrm{M}$, denoted by $\mathrm{S}_{\mathrm{Schur}}$ is derived as
	\begin{align}\small 
	\mathrm{S}_{\mathrm{Schur}} &= 2k\nabla^2 f(x) - 2A^TA-q_1I\nonumber\\
	&-(A\nabla^2f(x))^T(AA^T)^{-1}A\nabla^2f(x).\label{m/s}
	\end{align}
	Let $\mathrm{H}=\nabla^2f(x)$ for the notational simplicity. Note that in \eqref{pqrs}, $2k\mathrm{H}>0,\forall k>0$, $2A^TA \geq 0$, $q_1I>0$, and $0\leq \mathrm{H}A^T(AA^T)^{-1}A\mathrm{H} \leq \mathrm{H}^2$. The last terms is a consequence of $A^T(AA^T)^{-1}A\leq I$. Rearranging \eqref{m/s} as given below 
	\begin{align}\small
	2k\mathrm{H} &> 2A^TA+q_1I+\mathrm{H}A^T(AA^T)^{-1}A\mathrm{H}\nonumber\\
	2k\mathrm{H} &> 2A^TA+q_1I+\mathrm{H}^2\label{pqrs}
	\end{align}
	allows to choose $k$ such that $\mathrm{S}_{\mathrm{Schur}}>0$.
	Post multiplying \eqref{pqrs} by $(2H)^{-1}$ yields the following:
	\begin{align}\small
	2kI &>  2A^TA(2H)^{-1}+q_1(2H)^{-1}+\mathrm{H}^2(2H)^{-1}\nonumber\\
	kI &> A^TAH^{-1}+0.5q_1H^{-1}+0.5\mathrm{H}\label{kcond}.\end{align}
	
	Applying Courant-Fischer theorem \cite{horn1990matrix} to \eqref{kcond} yields the following:
	\begin{align}\small
	\lambda_{max}(kI)>\lambda_{max}(A^TAH^{-1}+0.5q_1H^{-1}+0.5\mathrm{H}).\label{mkcond}
	\end{align}
	Since $\lambda_{max}(kI) = k$, \eqref{mkcond} has the following form:
	\begin{align}\small
	k>\lambda_{max}(A^TAH^{-1}+0.5q_1H^{-1}+0.5\mathrm{H}).\label{mkcond1}
	\end{align}
	By choosing $k$ as given in \eqref{mkcond1} ensures that $\mathrm{S}_{\mathrm{Schur}}>0$. 
	But $k$ must also satisfy \eqref{kcond0}, thus $k$ must be chosen such that the following holds:
	\begin{align}
	k > \max\{\sqrt{q_2},\lambda_{max}(A^TAH^{-1}+0.5q_1H^{-1}+0.5\mathrm{H})\} \label{choosek}
	\end{align}
	ensures that both \eqref{kcond0} and \eqref{mkcond} are met. If $k$ is chosen according to \eqref{choosek}, then $\mathrm{S}_{\mathrm{Schur}}>0$ holds such that there exists a $\nu \geq \frac{q_1}{2}$ which implies that 
	\begin{align}
	\langle \mathbb{H}_{G_r(z)}v,v\rangle_r \geq \nu\norm{v}^2_r,\forall v \in T_z\mathcal{M}.
	\end{align}        
	Thus by using Proposition \ref{prop1.5}, the following is derived
	\begin{align}\small
	\langle {G}_r(z_1)-{G}_r(z_2),z_1-z_2\rangle_r\geq \nu \norm{z_1-z_2}^2_r. \label{ffinal}
	\end{align}
	Hence it is proved that $G_r(z)$ is strongly monotone\footnote[3]{Notice that by substituting $\mathrm{M}=\nabla {G}_r(z)+\nabla {G}^T_r(z)$, the strictly monotone map $G_r(z)$ (by definition) can be derived. However, a strongly monotone gradient mapping is also strictly monotone.}.
	%A result similar to \eqref{gradg} is recently proved in [Lemma 2,\cite{qu2019exponential}]. The same is extended here to prove that the vector field $G_r(z)$ is strongly monotone. Using Lemma 2 from \cite{qu2019exponential}, it is concluded that $\nabla G_r(z)$ is  positive definite and using Proposition \ref{prop1.5}, the strongly monotone $G_r(z)$ is derived such that the following condition is satisfied:
\end{proof}

\subsection{Exponential stability}
Without loss of generality, let us define $G_r(z)$ similar to \eqref{gu} as follows:
\begin{align}
G_r(z)=\begin{bmatrix}
\nabla^r_x {L}(x,\lambda)\\
-\nabla^r_\lambda {L}(x,\lambda)
\end{bmatrix}, \label{gut}
\end{align} where ${L}(x,\lambda)$ would represent the modified Lagrangian function whose gradient vector field is given by $G_r(z)$. Since, $G_r(z)$ is strongly monotone on $\mathcal{M}$, \eqref{mpddr} will converge to a unique saddle-point solution $z^*$.
\begin{theorem}\label{thm3.5}
	Let $G_r(z)$ be Lipschitz continuous on $D$, then inequality \eqref{ffinal} and $\alpha>0$, imply that the system \eqref{mpdd} with $z(0) \in \mathcal{M}$ is globally exponentially stable at the unique solution $z^*$ of \eqref{mvi}.
\end{theorem}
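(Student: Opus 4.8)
The plan is to certify exponential stability with the quadratic Lyapunov function measured in the chosen Riemannian metric, $V(z)=\tfrac{1}{2}\norm{z-z^*}^2_r$. This $V$ is positive definite, radially unbounded, and vanishes only at the unique equilibrium $z^*$, whose existence and uniqueness follow from strong monotonicity (Theorem \ref{uniqueo}) together with the equilibrium--solution equivalence of Remark \ref{remark3}. Since $z(0)\in\mathcal{M}$, Lemma \ref{lemma1.3} keeps the trajectory of \eqref{mpddr} on $\mathcal{M}$, so $V$ is well defined along solutions and $\dot V=\beta\langle z-z^*,\,P^r_\mathcal{M}[z-\alpha G_r(z)]-z\rangle_r$.

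The decisive estimate will come from non-expansiveness of the metric projection $P^r_\mathcal{M}$ in the $r$-norm, combined with the fixed-point identity $z^*=P^r_\mathcal{M}[z^*-\alpha G_r(z^*)]$ furnished by Proposition \ref{lgvifpp} and Remark \ref{remark3}. Writing $h(z)=P^r_\mathcal{M}[z-\alpha G_r(z)]-z$, so that $\dot z=\beta h(z)$ and $P^r_\mathcal{M}[z-\alpha G_r(z)]=z+h(z)$, non-expansiveness applied to the two arguments $z-\alpha G_r(z)$ and $z^*-\alpha G_r(z^*)$ gives $\norm{z+h(z)-z^*}_r\le\norm{(z-z^*)-\alpha(G_r(z)-G_r(z^*))}_r$. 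Squaring both sides in the $r$-inner product and cancelling the common $\norm{z-z^*}^2_r$ isolates the quantity I need:
\[
2\langle z-z^*,h(z)\rangle_r \le -2\alpha\langle z-z^*,\,G_r(z)-G_r(z^*)\rangle_r + \alpha^2\norm{G_r(z)-G_r(z^*)}^2_r - \norm{h(z)}^2_r.
\]

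Next I would invoke the two hypotheses. Strong monotonicity \eqref{ffinal} bounds the first term on the right below by $\nu\norm{z-z^*}^2_r$, contributing $-2\alpha\nu\norm{z-z^*}^2_r$; Lipschitz continuity of $G_r$ on $D$, with some constant $\ell$ in the $r$-norm, bounds the second term above by $\alpha^2\ell^2\norm{z-z^*}^2_r$. Discarding the harmless $-\norm{h(z)}^2_r\le0$ and substituting $\dot V=\beta\langle z-z^*,h(z)\rangle_r$ yields $\dot V\le-\tfrac{\beta}{2}(2\alpha\nu-\alpha^2\ell^2)\norm{z-z^*}^2_r=-\beta(2\alpha\nu-\alpha^2\ell^2)\,V$. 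For a step size in the range $0<\alpha<2\nu/\ell^2$ the constant $c:=\beta(2\alpha\nu-\alpha^2\ell^2)$ is strictly positive, so $\dot V\le-cV$, and the comparison lemma delivers $V(z(t))\le V(z(0))e^{-ct}$, i.e. $\norm{z(t)-z^*}_r\le\norm{z(0)-z^*}_re^{-ct/2}$ for all $t\ge0$, which is exactly global exponential stability of $z^*$.

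I expect the principal obstacle to be the faithful transcription of the Euclidean projection machinery into the Riemannian setting: one must justify that $P^r_\mathcal{M}$ is non-expansive and obeys the obtuse-angle inequality \eqref{bapro} with respect to $\langle\cdot,\cdot\rangle_r$, which is cleanest if one passes to coordinates in which $R$ renders $\norm{\cdot}_r$ an honest Euclidean norm, and then verifies that the equilibrium identity holds in that same metric. A secondary subtlety is that the sharp estimate needs $\alpha$ small relative to $\nu/\ell^2$, so the bare hypothesis \quotes{$\alpha>0$} should be read together with an admissible step-size range; the Lipschitz constant $\ell$ of $G_r$ (finite on $D$ because $R^{-1}$ is bounded and $\nabla G$ is continuous) must be quantified to exhibit the decay rate explicitly.
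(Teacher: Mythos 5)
Your argument is correct, but it takes a genuinely different route from the one in the paper. You certify decay of the bare quadratic function $\tfrac{1}{2}\norm{z-z^*}^2_r$ by combining the non-expansiveness of $P^r_\mathcal{M}$ in the $r$-norm with the fixed-point identity for $z^*$, then feeding in strong monotonicity \eqref{ffinal} and the Lipschitz bound to obtain $\dot V\leq-\beta(2\alpha\nu-\alpha^2\ell^2)V$ directly; this is essentially a self-contained re-derivation of the external result \cite[Theorem 4]{gao2003exponential} that the paper merely cites at the end to produce \eqref{conva}. The paper instead augments the Lyapunov function with the duality-gap terms (as in \eqref{Vz}), uses the obtuse-angle inequality \eqref{bapro} rather than non-expansiveness, and passes through the strong-convexity estimate \eqref{strong conv} to reach $\dot V_1\leq-\beta\min\{1,\alpha\nu\}V$ before invoking the cited theorem for the global Lipschitz-dependent rate. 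What your approach buys is an explicit, fully self-contained decay constant and---importantly---an honest exposure of the fact that the theorem's bare hypothesis ``$\alpha>0$'' is not enough: you need $0<\alpha<2\nu/\ell^2$ (the paper's own citation requires $\alpha<4\nu/\ell^2$), a restriction the theorem statement omits but the paper's proof quietly admits in its last line. What the paper's route buys is a decay estimate tied to the duality gap that does not use the Lipschitz constant until the final global step. Your flagged subtlety about transporting projection non-expansiveness to the $r$-inner product is genuine but benign here, since $R$ in \eqref{P} is a constant symmetric positive definite matrix, so $\norm{\cdot}_r$ is a Euclidean norm after a fixed linear change of coordinates and projection onto the convex set $\Omega$ remains non-expansive and satisfies \eqref{bapro} in that metric.
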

\begin{proof}
	For each $z(0) \in \mathcal{M}$, there exists a unique solution $z(t)$ of \eqref{mpdd}, that started from $z(0)$. If $[0,t_f)$ is the maximal interval of $z(t)$, then from Lemma \ref{lemma1.3}, $z(t) \in \mathcal{M}$ for all $t \in [0,t_f)$. Since $G_r(z)$ is strongly monotone, the following holds:
	\begin{align*}
	L(x^*,\lambda^*)-L(x^*,\lambda)> 0\\
	L(x,\lambda^*)-L(x^*,\lambda^*)> 0
	\end{align*}
	Let us define the Lyapunov function for the dynamic \eqref{mpddr} as follows:
	\begin{align}
	V_1(z) &= (L(x^*,\lambda^*)-L(x^*,\lambda))+(L(x,\lambda^*)-L(x^*,\lambda^*))\nonumber\\
	&~~~+\frac{1}{2}\norm{z-z^*}^2_r.
	\end{align}
	It is to be noted that $V_1(z)$ possesses a similar structure as that of $V(z)$ defined in \eqref{Vz}, it is also differentiable convex on $\mathcal{M}$, with $V_1(z) \geq \frac{1}{2}\norm{z-z^*}^2_r,\forall z \in \mathcal{M}$, thus bounding all level sets of $V_1(z)$.
	
	Differentiating $V_1(z)$ with respect to time $t$ yields:
	\begin{align}
	\dot{V}_1(z)&=\nabla V_1(z)\dot{z}\nonumber\\
	&=-\langle\nabla L(x,\lambda^*)-\nabla L(x^*,\lambda)+z-z^*,z-\tilde{z}\rangle_r\nonumber\\
	&=-\langle G_r(z)+z-z^*,z-\tilde{z}\rangle_r \label{eff}
	\end{align}
	Substituting $x = z-\alpha G_r(z)$ and $y=z^*$ in \eqref{bapro}\cite{gao2003exponential}, yields
	\begin{align}
	\langle z-z^*+\alpha G_r(z),z-\tilde{z}\rangle_r\geq \norm{z-\tilde{z}}^2_r+\langle\alpha (z-z^*),G_r(z)\rangle_r. \label{euse1}
	\end{align}
	Using \eqref{euse1} in \eqref{eff} yields,
	\begin{align}
	\dot{V}_1(z)&\leq -\langle\alpha (z-z^*),G_r(z)\rangle_r.\label{V1zdot}
	\end{align}
	If $k$ is chosen such that the condition \eqref{mkcond} is satisfied then $G_r(z)$ is strongly monotone.    Using Proposition \eqref{prop1.6} from the preliminary section, the strong monotonicity of $G_r(z)$ also leads to the strong convexity of the Lagrangian function $L(z),\forall z\in \mathcal{M}$, which implies that there exists a unique saddle point $z^*\in \mathcal{M}$, i.e. $\mathcal{M}^* = z^*$. Hence the following inequality can be obtained:
	\begin{align}
	\langle z-&z^*,G_r(z)\rangle_r\geq L(x,\lambda^*)-L(x^*,\lambda)+\frac{\nu}{2}\norm{z-z^*}^2_r, z \in \mathcal{M}.\label{strong conv}
	\end{align}
	
	Using \eqref{strong conv}, \eqref{V1zdot} modifies to the following
	\begin{align}
	\dot{V}_1(z)&\leq -\langle\alpha (z-z^*),G_r(z)\rangle_r\nonumber,\\
	&\leq-\alpha \beta[L(x,\lambda^*)-L(x^*,\lambda)+\frac{\nu}{2}\norm{z-z^*}^2_r],\nonumber\\
	&\leq-\alpha \beta[(L(x^*,\lambda^*)-L(x^*,\lambda))\nonumber\\
	&~~~+(L(x,\lambda^*)-L(x^*,\lambda^*))+\frac{\nu}{2}\norm{z-z^*}^2_r].
	\end{align}
	With $\alpha,\beta>0$, it can be shown that,
	\begin{align}
	\dot{V}_1(z)\leq -\beta\min\{1,\alpha\nu\}V(z).
	\end{align}
	Thus, it is proved that the system \eqref{mpdd} is exponentially stable at the unique solution $z^*$ of \eqref{mvi}.
	Therefor,
	\begin{align}
	\norm{z-z^*}_r\leq ce^{-\beta\frac{\min\{1,\alpha\nu\}}{2}t}
	\end{align}
	where $c=\sqrt{2V_1(z(0))}$.
	
	Further, if $G_r(z)$ is Lipschitz continuous on $\mathcal{M}$, i.e., $\norm{{G}_r(z_1)-{G}_r(z_2)}_r\leq \ell\norm{z_1-z_2}_r,\forall z_1,z_2\in \mathcal{M}$. By using \cite[Theorem 4]{gao2003exponential}, the global exponential stability of the projected PD dynamics can be derived:
	\begin{align}
	\norm{z(t)-z^*}_r\leq \norm{z(0)-z^*}_re^{\frac{-\alpha \beta(4\nu-\alpha \ell^2)}{8}t},\forall t \geq 0. \label{conva}
	\end{align}
	If $\alpha<\frac{4\nu}{\ell^2}$, it follows that the projected PD dynamic \eqref{mpddr} is globally exponentially stable.
\end{proof}
\section{Simulation Results}
This section presents simulation studies of the projected PD dynamics \eqref{mpddr}. It is known that the Euler discretization of the exponentially stable dynamical system owns geometric rate of convergence \cite{stuart1994numerical} for sufficiently small step-sizes. The projected PD dynamics \eqref{mpddr} is Euler discretized with a step size $s>0$ and the following discrete-time projected PD dynamics\cite{nagurney2012projected} is obtained.
\begin{align}
z(\tau+1)=\beta P^r_\mathcal{M}\{z(\tau)-\alpha G_r(\tau)\}.
\end{align}

First example (Example 1) considers an optimization problem of the form \eqref{lincvx} with $m=5$ and $n=10$. The Hessian matrix is assumed to be $H = 20{I}$ with $A$ and $b$ taken as Gaussian random matrix and vector respectively. The distance to the equilibrium point i.e. $z^*=(x^*,\lambda^*)$ for different values of parameter $k$ is shown separately in Fig. \ref{eps1} and \ref{eps12}, where $\varrho=\max\{\sqrt{q_2},\lambda_{max}(A^TAH^{-1}+0.5q_1H^{-1}+0.5\mathrm{H})\}$. It can be seen from the plots that the rate of convergence to the equilibrium points accelerates as the value of $k$ is increased. It implies that increasing the value of $k$ allows increasing the value of $\nu$, which further increases the coefficient of the negative exponential term in \eqref{conva}. The primal optimizers $x^*$ of the problem are also compared to the optimal solution of the same problem obtained using \quotes{quadprog} solver in MATLAB environment as shown in Fig. \ref{eps2}.

In the second example, an $L_2$ regularized least squares problem is considered with $m=30$ and $n=50$. The objective function is $f(x) = \norm{Cx-d}^2_2+\frac{\theta}{2}\norm{x}^2_2$ with $\theta>0$, constrained to $Ax\leq b$. Matrices $(C,A) \in \mathbb{R}^{m \times n}$, and vectors $(d,b)\in \mathbb{R}^{m \times 1}$ are Gaussian random matrices and vectors, respectively. Parameters $\alpha,\beta$ are chosen as unity and the proposed dynamics \eqref{mpddr} is simulated for $k = 1000\max(\varrho)$. A sketch of the error norm as a function of time is shown in Fig. \ref{eps3}. It can be seen that the error norm $\norm{x_i-x^*_i}^2$ has geometric rate of convergence.
\begin{figure}[t]
	\centering
	\includegraphics[width=5in]{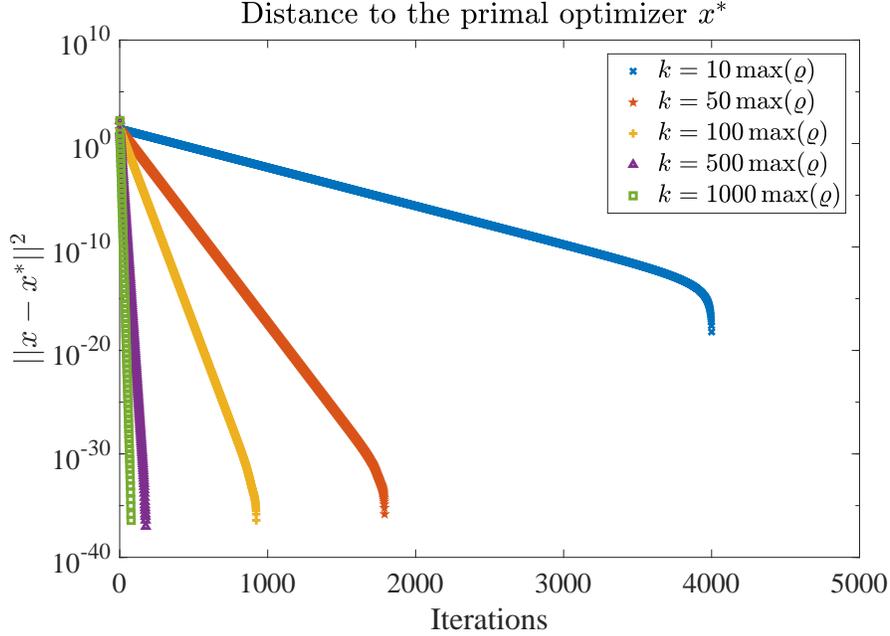}
	\caption{Distance to the primal optimizer $x^*$ for different values of $k$ (Example 1).}
	\label{eps1}
\end{figure}
\begin{figure}[t]
	\centering
	\includegraphics[width=5in]{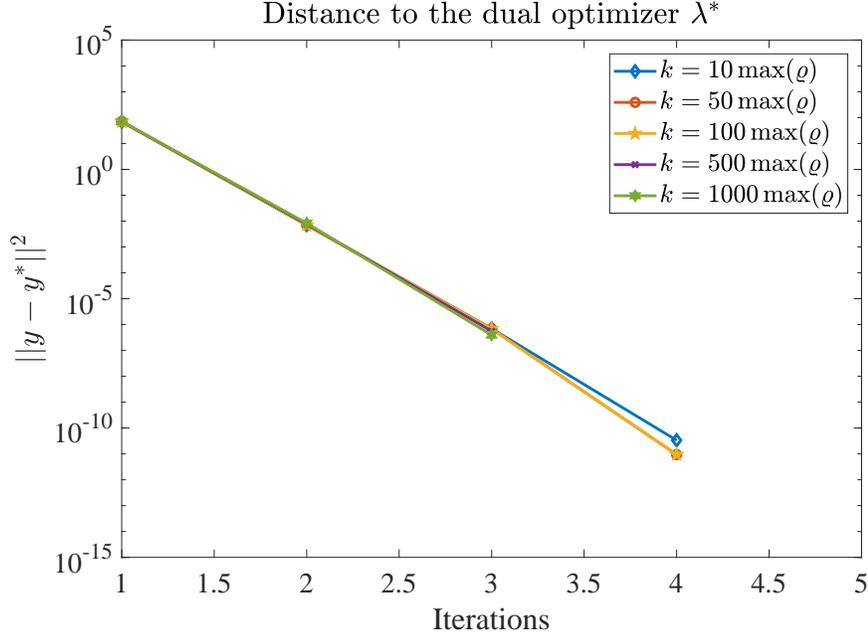}
	\caption{Distance to the dual optimizer $\lambda^*$ for different values of $k$ (Example 1).}
	\label{eps12}
\end{figure}
\begin{figure}[t]
	\centering
	\includegraphics[width=5in]{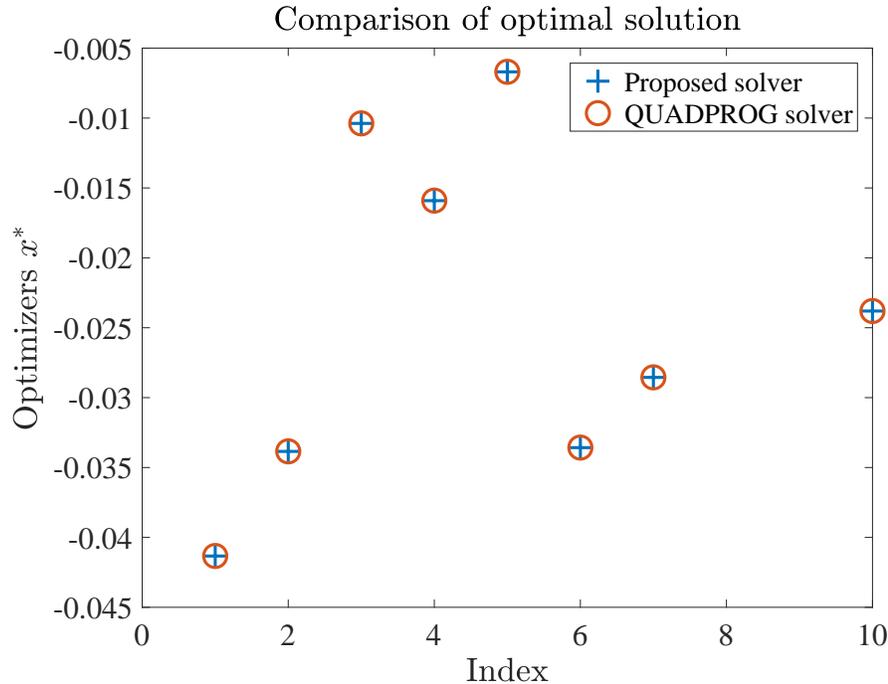}
	\caption{Optimal solution compared with ``{QUADPROG}" (MATLAB) solver.}
	\label{eps2}
\end{figure}
\begin{figure}[t]
	\centering
	\includegraphics[width=5in]{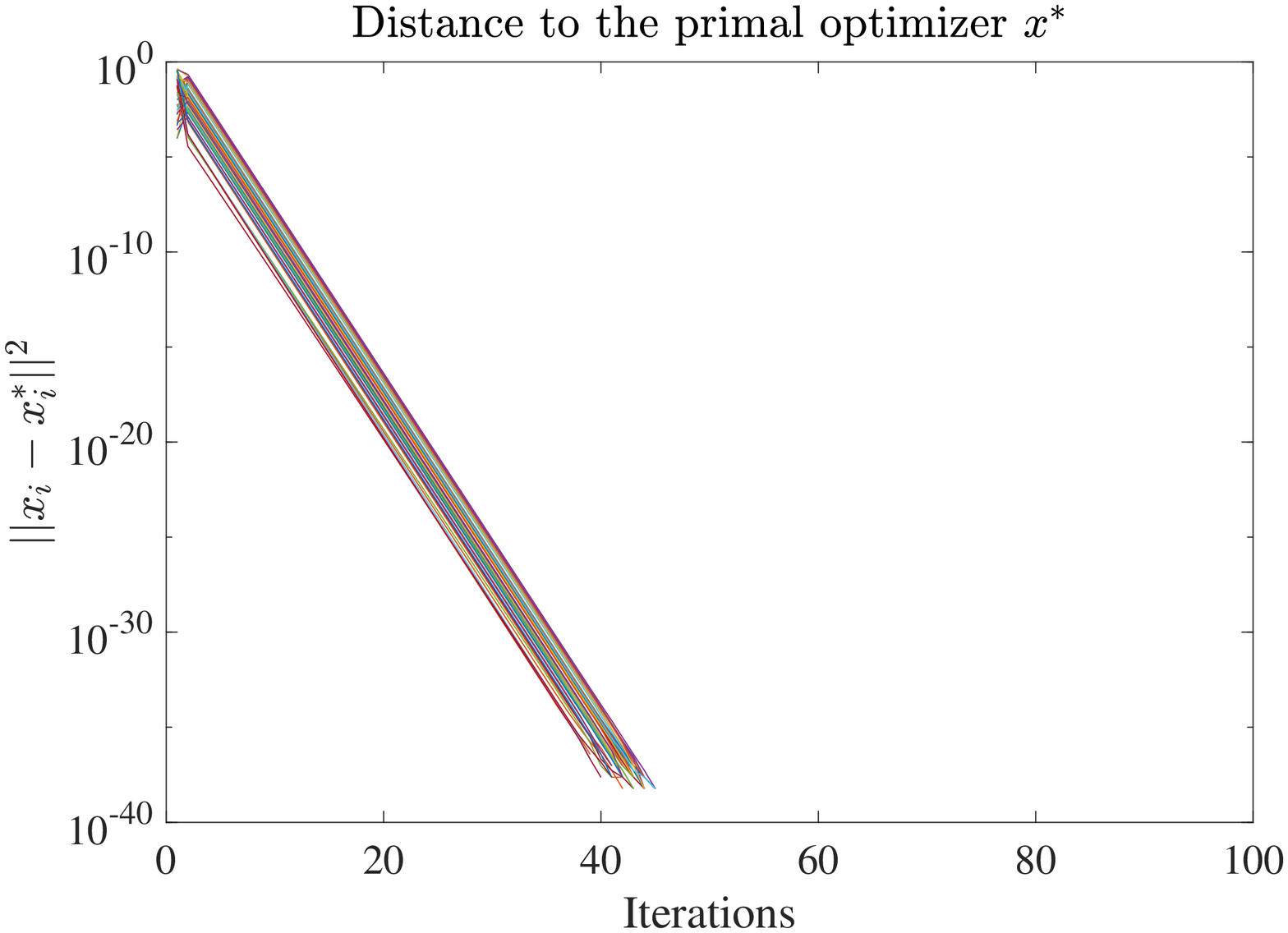}
	\caption{Distance to the primal optimizer $x^*$ ($L_2$ regularized least squares problem).}
	\label{eps3}
\end{figure}
\section{Conclusions and discussion}\label{concl}
This paper has proposed a projected dynamical system based formulation of the saddle point problem to solve a constrained convex optimization problem. Monotonicity properties of the gradient of the underlying Lagrangian function are evaluated. It is found that this gradient map is just monotone on the Euclidean space which restricts the proposed dynamics from being globally exponentially stable. This has lead to a saddle-point solution of the proposed dynamics which is only Lyapunov stable. It confirmed that the desired property concerning strong monotonicity of the underlying gradient map ceases to exist on the Euclidean space. Due to which the exponential stability of the proposed dynamics cannot be obtained. It further compelled to search for a differential geometry where such properties are obtainable. To this end, our results proved that the proposed dynamics is exponentially stable on a Riemannian manifold whose Riemannian metric is chosen such that the underlying gradient is strongly monotone. Then it is shown that the exponential stability holds globally under the Lipschitz continuity of the gradient map. However, the analysis pertains to a linear inequality constrained convex optimization problem. 

There are many network-based optimization problems that fall under the category of linear inequality constrained optimization, one of such problems is distributed support vector machines\cite{forero2010consensus,stolpe2016distributed} which is solved in a distributed manner over a network of nodes acquiring valuable statistics. The results of this paper can be further extended to such problems.

The proposed approach can also be generalized to a convex optimization problem with convex inequality constraints under regularity conditions. However, it will not be a straightforward extension of the present work. It is expected that the underlying mathematical framework would require additional properties concerning convexity of the inequality constraints which is left as future scope of this paper.
\bibliographystyle{unsrt}
\bibliography{dsvm_pd} 
\end{document}